\newtheorem{theorem}{Theorem}[section]
\newtheorem{lemma}[theorem]{Lemma}
\newtheorem{proposition}[theorem]{Proposition}
\theoremstyle{definition}
\newtheorem{definition}[theorem]{Definition}
\newtheorem{example}[theorem]{Example}
\theoremstyle{remark}
\newtheorem{remark}[theorem]{Remark}
\numberwithin{equation}{section}
\DeclareMathOperator{\sgn}{sgn}
\def\ip{\,\llcorner\,} 
\def\cL{\mathcal{L}}
\def\bR{\mathbb{R}}
\def\bN{\mathbb{N}}
\def\bT{\mathbb{T}}
\begin{document}

\title[One-dimensional singular parabolic problems with $BV$ data]{Energy solutions to one-dimensional singular parabolic problems with $BV$ data are viscosity solutions}

\author{Atsushi~Nakayasu}
\address{Graduate School of Mathematical Science, University of Tokyo,
 Komaba 3-8-1, Meguro-ku, Tokyo 153-8914, Japan}
 \email{ankys@ms.u-tokyo.ac.jp}

\author{Piotr~Rybka}
\address{Institute of Applied Mathematics and Mechanics,
Warsaw University\\ ul. Banacha 2, 07-097 Warsaw, Poland}
\email{rybka@mimuw.edu.pl}


\date{\today}



\begin{abstract}
We study one-dimensional very singular parabolic equations with periodic boundary conditions and initial data in $BV$, which is the energy space. We show existence of solutions in this energy space and then we prove that they are viscosity solutions in the sense of Giga-Giga.
\end{abstract}

\maketitle

\bigskip\noindent
{\bf Key words:} 
singular energies,  viscosity solutions

\bigskip\noindent
{\bf 2010 Mathematics Subject Classification.} Primary: 53K67 Secondary:  35D40

\section{Introduction}
\label{s:intro}

We study a Cauchy problem for one-dimensional singular parabolic equations of the form
\begin{eqnarray}
\label{e:ge}
& \partial_t u = \left(\cL(u_x)\right)_x, & (x,t)\in Q_T: =\bT\times (0,T), \\
\label{e:ic}
& u(x, 0) = u_0(x), & x\in \bT,
 \end{eqnarray}
where
$\cL:\bR\to\bR$ is monotone increasing and bounded. Such equations arise in models of crystal growth or image analysis. This is why it is reasonable to assume that $u_0\in BV$. For the sake of simplicity, we consider periodic boundary conditions. Here, $\bT$ is the one-dimensional flat torus $\bT  = \bR /\mathbb{Z}$.

Since $\cL$ is merely monotone increasing, it may have jumps. Typical examples are
\begin{equation}
\label{e:tvf}
\partial_t u = (\sgn(u_x))_x,
\end{equation}
and
\begin{equation}
\label{e:tvf2}
\partial_t u = (\sgn(u_x-1)+\sgn(u_x+1))_x .
\end{equation}
Equation (\ref{e:tvf}) is a prototype for the class, we have in mind, (\ref{e:ge}).
We also point out that equation \eqref{e:tvf} can be formally written as $\partial_t u = 2\delta(u_x)u_{xx}$,
where $\delta$ is the Dirac delta function.

Since problem (\ref{e:ge}) is one-dimensional, 
then for a given increasing $\cL$, we can always find convex $W$,  which we will call the energy density such that
$$
\cL (p) = \frac{dW}{dp}(p),\qquad \hbox{for } a.e.\ p\in \bR.
$$
Thus,  we can write  (\ref{e:ge}) 
as  an $L^2$-gradient flow
\begin{equation}\label{e:fgf}
\partial_t u = -\nabla E[u]
\end{equation}
with the corresponding energy, defined by
\begin{equation}\label{d:E}
E[u] = 
\left\{
\begin{array}{ll}
 \int_{\bT} W( Du) & \hbox{if }u \in BV(\bT),\\
 +\infty& \hbox{if }u \in L^2(\bT) \setminus  BV(\bT).
\end{array}
\right.
\end{equation}
In order to avoid unnecessary technicalities at this stage, we will restrict our attention to piecewise linear $W$'s which are coercive, i.e. $W(p)\to\infty$ when $|p|\to\infty$. We shall see (cf. (\ref{deWi})) that indeed
$E[u]$ is well-defined. We will look for solutions for which $t\mapsto E[u(t)]$ is bounded over $[0,T)$, we will call them {\it energy solutions}. There is  substantial literature on functionals on measures, including \cite{valadier}, \cite{Mandallena} and others.  The general case of $W$ requires additional considerations, see \cite{AmPa}, \cite{valadier}, \cite{fonseca}, \cite{Mandallena}.

In case of (\ref{e:tvf}),  we have $W(p) = |p|$, this is why  it
is called the \emph{total variation flow}.
Equation \eqref{e:tvf} has been studied quite extensively, \cite{fukui},  \cite{GGK01}, \cite{GiGoRy}, \cite{kiemury}, \cite{Mucha}, \cite{mury}, \cite{spohn}. These authors did not allow jumps in the initial datum $u_0$. For the sake of completeness, we should mention the growing literature on the higher dimensional versions of \eqref{e:tvf}. Let us mention only \cite{dirichlet.andreu}, \cite{briani-novaga}, \cite{med1}, \cite{med-book}, \cite{BeCaNo}, \cite{GGP13}.

On the other hand, (\ref{e:tvf2}) was studied in \cite{Miry} with initial condition $u_0\in BV$. There, the showed that if $u_0\in BV$, then the unique solution $u$, which we constructed there, belongs to $L^\infty(0,T; BV)$, see \cite[Theorem 2.1]{Miry}. Moreover, this kind of regularity is optimal, because there are discontinuous solutions. 

In case of   (\ref{e:tvf2}), we have  $W (p)= |p+1| + |p-1|$
then, we define $E[u]$ for  $u\in BV$ in a natural way, introduced in \cite{Miry}, which can be easily extended to convex piecewise linear $W$. 
However, for the sake of simplifying the exposition, we will use a piecewise linear structure of $W$, generalizing the method used in \cite{Miry}.
A difficulty here is the composition of nonlinear function $\cL$ with a measure $Du$, when $u\in BV$.

We also showed in \cite{Miry} that energy solutions are viscosity solutions in the sense of \cite{GGR14}. However, the proof of this fact presented in \cite{Miry} was direct and involved.
Here, we would like to generalize this fact. 
However, in order  to develop  intuition we will  restrict our attention to the case of convex, piecewise linear and coercive $W$. This will be a necessary preparation before tackling the general case of $W$ with linear growth. The direct method of (\cite{Miry}) is not likely to  work.

We will show that problem (\ref{e:ge}--\ref{e:ic}) is well posed for $u_0\in BV$, which is the energy space. We will exploit for this purpose the gradient flow structure of  (\ref{e:ge}).
%
We will construct solutions by means of regularizations as in 
\cite{mury} and \cite{Miry}. This will be done in Theorem \ref{tw-gl} below. However, our main result is Theorem \ref{t:ma} stating that energy solutions to (\ref{e:ge}) are also viscosity solutions in the sense of \cite{GGR14}. With this result, we obtain new tools to study properties of solutions to (\ref{e:ge}). Incidentally, this theorem shows that  for regular data, i.e. when not only $u_0$ is in  $BV$ but also $u_{0,x}\in BV$, then the notion of energy solutions, the almost classical solutions introduced in \cite{mury2} and the viscosity solutions to (\ref{e:ge}) coincide.

The rational behind our present work is the usefulness of viscosity solutions.
Originally, the 
viscosity solution theory was introduced in early 1980s by Crandall and Lions, \cite{Cral} as a weak solution of first-order Hamilton-Jacobi equation
and generalized to some regular second-order equations around 1990;
see \cite{CIL92}. A big success of this theory was the study of the  mean curvature flow equation independently
by Evans-Spruck \cite{ES91} and Chen-Giga-Goto \cite{CGG91}.
The viscosity solution for the very singular diffusion equation is first introduced by Giga-Giga \cite{GG98,GG99, GG01}. 
The authors establish a theorem on a unique existence of the viscosity solution of a class of very singular diffusion equations including \eqref{e:ge} with continuous initial data $u_0$. The present paper provides more of examples existence theorems, this time with discontinuous initial conditions.

The plan of the paper is as follows: in Section 2, we show a lower semicontinuity of functional $E$, then existence of solutions in the energy space by the regularization method. Section 3 is devoted to introducing the necessary notions pertinent to the viscosity solutions. In Section 4, we construct monotone sequences of continuous functions approximating the initial conditions. This guarantees the existence of continuous solutions to (\ref{e:ge})--(\ref{e:ic}). Then, passing to the limit with the regularizing parameter yields the desired result.

\section{Energy solutions}
\label{s:pre}
We  could use the abstract semigroup theory, see \cite{brezis}, exploiting the gradient flow structure, (\ref{e:fgf}), to prove existence and uniqueness of solutions to (\ref{e:ic}) with data in $L^2$ or $BV$. For this purpose, it suffices to show  that $E$ defined by (\ref{d:E}) is convex, proper and lower semicontinuous with respect to the $L^2$ topology. However, in this paper, not only the fact that $u_0$ belongs to $BV$ is important but also how $u_0$ is approximated by smooth functions. This is why we will use the method of regularization to construct energy solutions. By `energy solutions' we will understand weak solutions with data from the vector space where $E$ is finite. We restrict our attention 
to $W$ which are piecewise linear, convex and coercive.

\subsection{Functional $E$}

Understanding $E$ is crucial for the present paper.
We consider only $W$ given by
\begin{equation}\label{deW}
 W(p) = \sum_{j=1}^{N^+} \alpha_j^+ (p+\beta_j^+)^+ 
 + \sum_{j=1}^{N^-} \alpha_j^- (p+\beta_j^-)^- ,
\end{equation}
where  $N^\pm$ are positive integers, all $\alpha^\pm
_i$ are positive and $\beta^\pm_i$ are real. We adopt the usual definition,
$x^+ = \max\{0,x\}$ and $x^- = (-x)^+$ for a real number $x$. 


First, we will explain that these are all functions of interest to us.

\begin{proposition}
Let us suppose that $W:\bR\to\bR$ is piecewise linear, convex and there is $M>0$ such that $\frac{dW}{dp}(p)$ in constant for $|p|>M$.
Then, $W$ has the form (\ref{deW}). 
\end{proposition}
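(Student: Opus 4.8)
The plan is to read off everything from the derivative. Since $W$ is convex and piecewise linear, $\frac{dW}{dp}$ is a nondecreasing step function; its jumps occur at the finitely many breakpoints $q_1 < \dots < q_n$ of $W$, and the assumption that $\frac{dW}{dp}$ is constant for $|p|>M$ forces every $q_i$ to lie in $[-M,M]$, so there are indeed only finitely many. I write $c_i>0$ for the (positive, by convexity) jump of $\frac{dW}{dp}$ at $q_i$, and let $m_-,m_+$ be the constant slopes on $(-\infty,q_1)$ and $(q_n,\infty)$. Because we work under the standing coercivity assumption $W(p)\to\infty$ as $|p|\to\infty$, the extreme slopes satisfy $m_-<0<m_+$; this is what will later produce positive coefficients on both the $(\cdot)^+$ and the $(\cdot)^-$ side, and in particular $N^\pm\ge 1$.

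Next I would match $\frac{dW}{dp}$ with the derivative of a candidate of the form (\ref{deW}). Differentiating (\ref{deW}) gives the step function $\sum_j \alpha_j^+\,\mathbf 1_{\{p>-\beta_j^+\}}-\sum_j \alpha_j^-\,\mathbf 1_{\{p<-\beta_j^-\}}$, whose jumps sit at the points $-\beta_j^\pm$ and whose limits at $+\infty$ and $-\infty$ are $\sum_j\alpha_j^+$ and $-\sum_j\alpha_j^-$. So I place all breakpoints $-\beta_j^\pm$ at the $q_i$ and, at each $q_i$, split its jump as $c_i=\alpha_i^++\alpha_i^-$ with $\alpha_i^\pm\ge 0$ (discarding any term whose coefficient is $0$). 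The jump condition is then automatic, and the two outer slopes match precisely when $\sum_i\alpha_i^+=m_+$ (equivalently $\sum_i\alpha_i^-=-m_-$, consistent because $\sum_i c_i=m_+-m_-$). With such a choice the candidate and $W$ have identical derivatives, hence agree up to an additive constant.

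The one genuinely delicate point, and the main obstacle, is that (\ref{deW}) carries no free additive constant, so the splitting must fix the constant as well. Any function of the form (\ref{deW}) is automatically nonnegative, and reading off its left asymptote $W\sim m_-p+b_L$ shows that the split must satisfy, besides the mass constraint $\sum_i\alpha_i^-=-m_-$, the first-moment constraint $\sum_i\alpha_i^-q_i=b_L$. This is a one-dimensional feasibility problem: under $\alpha_i^-\in[0,c_i]$ with $\sum_i\alpha_i^-=-m_-$, the quantity $\sum_i\alpha_i^-q_i$ ranges over a nonempty closed interval $[\mu_{\min},\mu_{\max}]$, and I must place $b_L$ inside it. Here I would use that $W$ enters the problem only as an energy density: replacing $W$ by $W+\mathrm{const}$ only shifts $E$ by a constant in (\ref{d:E}) and changes neither (\ref{e:ge}) nor its gradient-flow structure, so $W$ is at our disposal up to an additive constant. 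A shift $W\mapsto W-c$ leaves $q_i,c_i,m_\pm$ untouched and translates $b_L$ to $b_L-c$; choosing $c$ so that $b_L-c\in[\mu_{\min},\mu_{\max}]$ makes the moment equation solvable. The remaining checks, that the surviving coefficients are positive and that at least one term of each type occurs (so $N^\pm\ge 1$, using $m_-<0<m_+$), are then routine bookkeeping.
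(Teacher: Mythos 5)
Your proof is correct, and its core is the same as the paper's: read everything off the nondecreasing step function $\frac{dW}{dp}$, i.e.\ its finitely many jump points and jump sizes, and distribute these between the $(\cdot)^+$ and $(\cdot)^-$ sums so that the two asymptotic slopes come out right (coercivity giving $m_-<0<m_+$ and hence $N^\pm\ge 1$ in both arguments). The difference is in how the two arguments pin down the representation beyond the derivative. The paper anchors everything at the leftmost minimizer $a_0$ of $W$ and uses the canonical splitting: every breakpoint to the right of $a_0$ goes entirely into the $(\cdot)^+$ sum and every breakpoint to the left entirely into the $(\cdot)^-$ sum, so no feasibility problem ever arises; you instead allow each jump $c_i$ to be split as $\alpha_i^++\alpha_i^-$ and then must solve the first-moment constraint $\sum_i\alpha_i^-q_i=b_L$, which costs you the interval argument but in exchange describes exactly which additive constants are representable.

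Your ``genuinely delicate point'' is in fact a real issue that the paper's proof passes over in silence: any function of the form (\ref{deW}) is nonnegative, so the proposition is false as literally stated for, say, $W(p)=|p|-1$. The paper's construction never verifies the value of $W$ at all --- it only matches slopes --- and what it implicitly produces is the representative normalized so that $\min W=0$. Your resolution (only $\cL=W'$ enters \eqref{e:ge}, and shifting $W$ by a constant shifts $E$ by a constant without affecting the gradient flow, so one may normalize the additive constant) is the natural reading of the statement and makes the argument honest. So: same underlying decomposition, but your version repairs a gap in the constant-matching that the paper's proof does not address.
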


\begin{proof}
We will first find $ \beta_i^\pm$'s. We notice that coercivity and convexity of $W$ imply that
$$
a_0:= \min \{x:\ W(x) = \min\{ W(p):\ p\in \bR\}\}
$$
is well defined.
Since by assumption $W$ is linear for large arguments, then by definition,
there are real $a_i$, $i= -M, \ldots, N$, (we also set $a_{N+1} = +\infty$) such that
$$
\begin{array}{ll}
 W(p) = s_i(x-a_i) + W(a_i), &\hbox{for } x\in [a_i,a_{i+1}),\quad i= -M, \ldots, N+1,\\
 W(p) = s_{-M-1} (x- a_{-M}) W(a_{-M}) &\hbox{for } x\in (-\infty, a_{-M}).
\end{array}
$$
If $s_0 =0$, then we set 
\begin{eqnarray*}
\beta_i^+ = a_i, \quad i=1,\ldots, N,& \hbox{and }&N^+:=N,\\
\beta_i^- = a_{1-i}, \quad i=1,\ldots , M, &\hbox{and }&N^-:=M.
\end{eqnarray*}
Otherwise, we define
\begin{eqnarray*}
\beta_i^+ = a_{i-1}, \quad i=1,\ldots, N+1, &\hbox{and }&N^+:=N+1,\\
\beta_i^- = a_{1-i}, \quad i=1,\ldots , M, &\hbox{and }&N^-:=M.
\end{eqnarray*}

Monotonicity of the derivative of $W$ implies monotonicity of sequence $s_i$,  $i= -M, \ldots, N$.
We set $\alpha_1^+ = s_1$. Since $s_{i+1}>s_i$, then there is positive $\alpha_{i+1}$ such that
$$
s_{i+1} = \alpha_{i+1}^+ + s_i = \sum_{k=1}^i \alpha_k^+, \qquad i =1,\ldots, N^+-1.
$$
Similarly, we define $\alpha_i^-$, $i =1,\ldots, N^-$.
\end{proof}

Formula (\ref{deW}) and positivity of $N^+$ and $N^-$ imply the existence of positive $c_0, c_1, c_2$ such that
\begin{equation}\label{rngrw}
 c_1|p| - c_2 \le W(p) \le c_0(|p|+1).
\end{equation}

The advantage of (\ref{deW}) is clearly seen when it comes to defining $\int_\bT W(Du)$, because it refers to well-defined operations on measures. Indeed, if $\mu$ is a (finite) signed measure, then $\mu^\pm = \frac12(|\mu|\pm\mu)$, where $|\mu|$ is the variation of $\mu$.
Thus, for  $u\in BV(\bT)$, the following formula is correct, despite the lack of homogeneity of $W$,
\begin{equation}
\label{deWi}
 \int_\bT W(Du) 
 = \int_\bT \left( \sum_{i=1}^{N^+} \alpha_i^+ (Du+\beta_i^+)^+ 
 + \sum_{j=1}^{N^-} \alpha_j^- (Du+\beta_i^-)^- \right).
\end{equation}

We notice that obviously $E$
is convex. It is also lower semicontinuous.

\begin{proposition}\label{p:2.2}
 $E$ is lower semicontinuous with respect to the convergence in the $L^2$ topology.
\end{proposition}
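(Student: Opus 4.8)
The plan is to exploit the convexity of $W$ to rewrite $E$ as a pointwise supremum of functionals that are continuous with respect to $L^2$ convergence; since a supremum of a family of continuous functionals is automatically lower semicontinuous, the conclusion follows at once. Concretely, I would produce a dual representation
\[
E[u] = \sup_{\phi} \Big( -\int_\bT u\,\phi'\,dx - \int_\bT W^*(\phi)\,dx \Big),
\]
where the supremum runs over $\phi \in C^1(\bT)$ taking values in the interval $[s_-,s_+]$ spanned by the extreme slopes of $W$, and $W^*$ is the convex conjugate. The integration by parts $\int_\bT \phi\,dDu = -\int_\bT u\,\phi'\,dx$ is valid without boundary terms because we work on the torus. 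For each fixed admissible $\phi$ the map $u \mapsto -\int_\bT u\,\phi'\,dx - \int_\bT W^*(\phi)\,dx$ is affine and $L^2$-continuous: the factor $\phi'$ lies in $L^\infty(\bT) \subset L^2(\bT)$, while $\int_\bT W^*(\phi)\,dx$ is a finite constant since $W^*$ is bounded on the compact interval $[s_-,s_+]$. Hence the displayed supremum is $L^2$-lower semicontinuous.

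It remains to verify that this supremum really equals $E$ as defined by (\ref{d:E}) and (\ref{deWi}). Here I would lean on the piecewise linear structure rather than on general measure theory. The elementary duality for the positive part of a finite signed measure $\nu$,
\[
\nu^+(\bT) = \sup\Big\{ \int_\bT \psi\,d\nu : \psi \in C(\bT),\ 0 \le \psi \le 1 \Big\},
\]
applied to each signed measure $\nu = Du + \beta_i^\pm\,dx$ occurring in (\ref{deWi}), expresses every summand $\alpha_i^+ (Du+\beta_i^+\,dx)^+(\bT)$ and $\alpha_j^- (Du+\beta_j^-\,dx)^-(\bT)$ as a supremum of functionals of the form $-\int_\bT u\,\psi'\,dx + \beta\int_\bT \psi\,dx$. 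Taking a common family of test functions and forming the finite positive combination recovers exactly the dual representation above, with the admissible range of $\phi$ dictated by the slopes $\alpha,\beta$. This identifies the supremum with $\int_\bT W(Du)$ for $u \in BV(\bT)$.

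For $u \in L^2(\bT) \setminus BV(\bT)$ one checks the supremum equals $+\infty$, matching the definition of $E$: by coercivity (\ref{rngrw}) we have $W^*(\xi) \le c_2$ whenever $|\xi| \le c_1$, so restricting to $|\phi| \le c_1$ gives $E[u] \ge c_1\,|Du|(\bT) - c_2$, and since $u \notin BV$ forces $|Du|(\bT) = +\infty$ the supremum is infinite. I expect the genuine difficulty to lie in the identification step on $BV$, precisely in matching the singular part of $Du$: the dual formula treats $(Du)^s$ through the extreme slopes $s_\pm$, that is, through the recession function of $W$, and one must confirm this coincides with the pointwise interpretation in (\ref{deWi}). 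The piecewise linear form is exactly what makes this tractable, since each building block $(p+\beta)^\pm$ has constant recession slope, so nothing beyond the duality for the positive part of a measure is required; the remaining care is in approximating the optimal (possibly discontinuous) $\psi$ adapted to the sign of $(Du)^s$ by admissible $C^1$ test functions.
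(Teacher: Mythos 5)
Your proof is correct, but it takes a genuinely different route from the paper's. The paper argues in an elementary, hands-on way adapted to the torus: since $(Dv)(\bT)=0$ for $v\in BV(\bT)$, each summand in \eqref{deWi} satisfies $(Dv+\beta\,dx)^{\pm}(\bT)=\frac12\bigl(|Dv+\beta\,dx|(\bT)\pm\beta|\bT|\bigr)$, so the whole statement reduces to lower semicontinuity of $v\mapsto|Dv+\beta\,dx|(\bT)$; this is then obtained by cutting the torus at a continuity point common to $u$ and all $u_n$ (such a point exists because the jump sets are countable) and applying the classical lower semicontinuity of the total variation to $v+\beta x$ on the resulting interval. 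You instead exhibit $E$ as a supremum of $L^2$-continuous affine functionals via convex duality --- the Goffman--Serrin/Bouchitt\'e--Valadier mechanism that the paper only gestures at through its citation of \cite{valadier}. Your identification step does go through: the conjugate of each block $\alpha(p+\beta)^{\pm}$ is the linear function $\xi\mapsto-\beta\xi$ on $[0,\alpha]$ (resp.\ $[-\alpha,0]$), so for $\phi=\sum_i\alpha_i^+\psi_i-\sum_j\alpha_j^-\psi_j$ the inf-convolution inequality $W^*(\phi)\le-\sum_i\alpha_i^+\beta_i^+\psi_i+\sum_j\alpha_j^-\beta_j^-\psi_j$ is exactly what lets a single admissible $\phi$ dominate the sum of the individual suprema coming from $\nu^{\pm}(\bT)=\sup\{\pm\int\psi\,d\nu:\ 0\le\psi\le1\}$; this inequality is the one point you should make explicit, since as written you assert that a sum of suprema is recovered by one supremum. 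Your treatment of $u\notin BV$ via \eqref{rngrw} is also sound, as $c_1|p|-c_2\le W(p)$ forces $[-c_1,c_1]\subset[s_-,s_+]$ and $W^*\le c_2$ there. What your route buys is generality: it uses neither dimension one, nor periodicity, nor piecewise linearity in an essential way, it gives semicontinuity for weaker modes of convergence, and it verifies in passing the Remark that \eqref{deWi} agrees with the relaxed functional (the singular part of $Du$ being charged through the recession slopes $\sum_i\alpha_i^+$ and $-\sum_j\alpha_j^-$). What the paper's route buys is brevity and self-containedness, at the price of the slightly delicate choice of a common continuity point for the whole sequence.
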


\begin{proof}
Let us suppose that  $u_n \to u$ in $L^2$. Since for any $v\in BV(\bT)$  and $a \in \mathbb{R}$, we have
$$
\int_\bT (Dv +a) = a | \bT|,
$$
then in order to prove our claim, it is 
sufficient to check  that
$$
\varliminf_{n\to+\infty} \int_{\bT}|Du_n + a| \ge \int_{\bT} |Du + a|.
$$

We may assume $\int_\bT |Du_n| \le M < \infty$, $n \in \bN$, for otherwise there is nothing to prove.
This bound and the lower semicontinuity of the $BV$ norm imply
$$
\varliminf_{n\to+\infty} \int_{\bT}|Du_n | \ge \int_{\bT} |Du|,
$$
as a result, $u \in BV$.

We recall that if $u\in BV(\bT)$, then 
$$
Du = \frac{du}{dx} \ip \cL^1 + D^c u + \sum_{i\in J} a_i \delta_{x_i},
$$
where $\frac{du}{dx} \ip \cL^1$ is  absolutely continuous with respect to $\cL^1$ part of $Du$, $D^c u$ is the continuous part singular with respect to $\cL^1$ and $ \sum_{i\in J} a_i \delta_{x_i}$ is the jump part.
Since the number of jump discontinuities points of $u$ and $u_n$, $n\in \bN$, is at most countable, then
there exists $x_0\in \bT$, a common 
continuity point of all $u_n$, $n\in \bN$ and $u$. That is,
\begin{equation}\label{e:co}
 |D u_n|(\{x_0\}) = 0 = |D u|(\{x_0\})\qquad \hbox{for all }n\in \bN.
\end{equation}
Once we identify $\bT$ with $[0,1)$, then for the sake of simplicity of notation we may assume that $x_0=0$. We notice that
$$
Du_n + a 
= D(u_n + a
)  - a 
\delta_0.
$$
Furthermore, due to (\ref{e:co}), we have
$$
| D(u_n + a
)  - a 
\delta_0 | = 
| D(u_n + a 
x)| \ip \bT\setminus \{0\}.
$$
We notice that $(u_n + a
x) \chi_{\bT\setminus\{0\}}$ is a bounded sequence in $BV(\bT)$,
hence the lower semicontinuity of the norm implies that
$$
\varliminf_{n\to+\infty} \int_{\bT}|D(u_n + a
x )| \ge \int_{\bT} |D(u + a 
x)|. 
$$
\end{proof}

\begin{remark}
Density $W$ is not homogeneous, but since $W$ is piecewise linear, one could easily see that  $E$ coincides with its  relaxation, cf. \cite{AmPa}, \cite{fonseca}, \cite{Mandallena}.
\end{remark}

\subsection{Existence of energy solutions by the regularization method}

In this section our goal is to examine the existence of solutions to \eqref{e:ge}, \eqref{e:ic},
when the nonlinearity  $\cL$ is the derivative of a convex function $W$, given by (\ref{deW}). 
We studied problem (\ref{e:ge}--\ref{e:ic})  in \cite{mury},  for rather general nonlinearities $\cL$ and Dirichlet boundary conditions, while assuming that $u_0\in L^1$ and $u_{0,x}\in BV$. In \cite{Miry} we considered general data $u_0\in BV$, while focusing on  special $\cL$,
$$
\cL(p) = \sgn(p+1) + \sgn(p-1).
$$
Here, we will use the method of \cite{mury} and \cite{Miry} to prove the existence of energy solutions.

Of course, one could study $W$ with a different growth rate at infinity, but we will not do this here. We would be happy to work with general $W$, but for the time being we will consider only $W$ which is convex and piecewise linear, given by (\ref{deW}).
In principle,  one of $N^+$ and $N^-$ could be zero, but we will not address this issue here.

We introduce the definition of solutions to (\ref{e:ge}) with 
periodic boundary conditions. In principle, it is well-known, see
\cite{dirichlet.andreu}, \cite{mury}.

\begin{definition}
\label{d1}
We shall say that  function $u\in L^2(0,T;L^2(\bT))$ is a {\it finite energy} solution to 
(\ref{e:ge}) if $u\in L^\infty(0,T;BV(\bT))$ and  $u_t\in L^2(0,T;L^2(\bT))$ and there is $\Omega \in L^2(0,T; W^{1,2}(\bT))$ satisfying the identity
\begin{equation}
\label{rn-d1}
\langle u_t, \varphi \rangle = - \int_\bT \Omega \varphi_x\,dx
\end{equation}
for all test functions $\varphi \in C^\infty(\bT)$ and for almost every $t>0$.
\end{definition}

We notice that the time regularity, postulated in Definition \ref{d1}
implies that solutions to  (\ref{e:ge}) are in $C([0,T];L^2(\bT))$. Hence,
we can impose initial conditions (\ref{e:ic}).
At this point we stress that $\Omega$ is a selection of the composition of 
multivalued operators $\cL\circ u_x$.

We will show the existence result. We note that we consider less regular initial conditions than in \cite{mury}. We prefer to use tools based on approximation of (\ref{e:ge}) by smooth problems. 

\begin{theorem}
\label{tw-gl}
Let us suppose that $u_0\in BV(\bT)$, then there exists an energy solution to (\ref{e:ge}-\ref{e:ic}).
Moreover, 
since for almost all $t>0$, $u$ satisfies
\begin{equation}
\label{rn-tw1}
\int_\bT [W(u_x + h_x) - W(u_x)]\,dx \ge \int_\bT \Omega  h_x\,dx,
\end{equation}
for all $h\in C^{\infty}(\bT)$,
hence $u$ is a unique solution.
\end{theorem}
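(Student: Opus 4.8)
The plan is to construct the energy solution as a limit of smooth approximations, exactly in the spirit of \cite{mury} and \cite{Miry}. First I would regularize the problem on two fronts: replace the initial datum $u_0\in BV(\bT)$ by smooth functions $u_0^\varepsilon\in C^\infty(\bT)$ converging to $u_0$ in $L^2(\bT)$ with $\int_\bT W(u_{0,x}^\varepsilon)\,dx$ bounded uniformly (this is possible because $E$ is the relaxed functional, cf. the Remark, so one can choose a recovery sequence), and replace the singular monotone $\cL=dW/dp$ by a smooth, strictly increasing, bounded approximation $\cL^\varepsilon$ with convex primitive $W^\varepsilon\to W$. For each fixed $\varepsilon>0$ the regularized equation $\partial_t u^\varepsilon=(\cL^\varepsilon(u_x^\varepsilon))_x$ with datum $u_0^\varepsilon$ is a non-degenerate quasilinear parabolic problem, so classical parabolic theory gives a unique smooth solution $u^\varepsilon$.

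Next I would derive the a priori estimates that survive the limit $\varepsilon\to0$. Testing the equation with $\partial_t u^\varepsilon$ gives the energy dissipation identity
\begin{equation*}
\int_0^T\!\!\int_\bT |\partial_t u^\varepsilon|^2\,dx\,dt + \int_\bT W^\varepsilon(u_x^\varepsilon(T))\,dx = \int_\bT W^\varepsilon(u_{0,x}^\varepsilon)\,dx,
\end{equation*}
which, using the uniform bound on the initial energy and the coercivity (\ref{rngrw}), yields that $u^\varepsilon$ is bounded in $L^\infty(0,T;BV(\bT))$ and $\partial_t u^\varepsilon$ is bounded in $L^2(Q_T)$. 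Setting $\Omega^\varepsilon:=\cL^\varepsilon(u_x^\varepsilon)$, the boundedness of $\cL^\varepsilon$ bounds $\Omega^\varepsilon$ in $L^\infty$, and from the equation $\partial_x\Omega^\varepsilon=\partial_t u^\varepsilon$ is bounded in $L^2(Q_T)$, so $\Omega^\varepsilon$ is bounded in $L^2(0,T;W^{1,2}(\bT))$. These bounds let me extract (along a subsequence) weak limits $u$, $\Omega$ in the respective spaces, together with strong $L^2(Q_T)$ convergence of $u^\varepsilon$ via Aubin--Lions. Passing to the limit in the weak formulation then produces the identity (\ref{rn-d1}), so $u$ is a finite energy solution and the initial condition is attained by the $C([0,T];L^2)$ regularity.

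The genuinely delicate step, and the main obstacle, is identifying the limit $\Omega$ as an admissible selection of the multivalued composition $\cL\circ u_x$, equivalently establishing the variational inequality (\ref{rn-tw1}). Since $\cL$ is discontinuous and $u_x$ is only a measure, one cannot pass to the limit pointwise. My approach is to work at the level of the convex energy: for the smooth problem, monotonicity/convexity gives
\begin{equation*}
\int_\bT [W^\varepsilon(u_x^\varepsilon+h_x)-W^\varepsilon(u_x^\varepsilon)]\,dx \ge \int_\bT \Omega^\varepsilon h_x\,dx
\end{equation*}
for every test gradient $h_x$; I then pass to the limit using weak convergence of $\Omega^\varepsilon$ on the right and lower semicontinuity (Proposition \ref{p:2.2}) together with the convergence $W^\varepsilon\to W$ on the left, arriving at (\ref{rn-tw1}) with the relaxed functional $E$ replacing the naive integral. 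This inequality characterizes $\Omega$ as $-\nabla E[u]$ in the subdifferential sense, which pins down $\Omega$ uniquely.

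Finally, uniqueness follows from the gradient-flow structure: if $u_1,u_2$ are two finite energy solutions with the same datum, subtracting their equations, testing with $u_1-u_2$, and invoking the variational inequality (\ref{rn-tw1}) for each (monotonicity of the subdifferential of the convex functional $E$) yields $\tfrac12\tfrac{d}{dt}\|u_1-u_2\|_{L^2}^2\le 0$, whence $u_1\equiv u_2$. I expect the careful handling of the measure-valued gradient in the singular parts of $Du$ — ensuring the passage to the limit in (\ref{rn-tw1}) respects the decomposition into absolutely continuous, Cantor, and jump parts — to be the part requiring the most technical care, and this is precisely where the explicit piecewise-linear structure (\ref{deWi}) of $W$ is exploited.
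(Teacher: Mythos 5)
Your architecture matches the paper's proof almost exactly: regularize $\cL$ and $u_0$, solve the uniformly parabolic problem classically, test with $\partial_t u^\epsilon$ to obtain the energy--dissipation identity, extract limits by weak compactness and Aubin--Lions, take $\Omega$ as the weak $L^2(0,T;H^1)$ limit of $\cL^\epsilon(u_x^\epsilon)$, and prove uniqueness by monotonicity of $\cL$. (For the uniqueness step, note that the paper must invoke a shifted integration-by-parts lemma, Lemma \ref{cal-cze}, to pair the $H^1$ function $\Omega(u)-\Omega(v)$ with the merely $BV$ function $u-v$ on the torus without producing boundary terms; your ``test with $u_1-u_2$'' glosses over this, but it is a fixable technicality.)

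There is, however, one step where the tool you name does not do the job, and it is exactly the step you yourself flag as delicate. In passing to the limit in
\[
\int_\bT\bigl[W^\epsilon(u_x^\epsilon+h_x)-W^\epsilon(u_x^\epsilon)\bigr]\,dx \ \ge\ \int_\bT\Omega^\epsilon h_x\,dx,
\]
lower semicontinuity (Proposition \ref{p:2.2}) gives $\varliminf_\epsilon\int_\bT W^\epsilon(u_x^\epsilon)\,dx\ge\int_\bT W(Du)$, which is the correct direction for the \emph{subtracted} term; but the term $\int_\bT W^\epsilon(u_x^\epsilon+h_x)\,dx$ must be controlled \emph{from above} by $\int_\bT W(Du+h_x)$ in the limit, and lower semicontinuity yields the opposite inequality. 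Since $u_x^\epsilon$ converges only weakly-$*$ as measures and $E$ is not continuous under that convergence, the required bound $\varlimsup_\epsilon\int_\bT W^\epsilon(u_x^\epsilon+h_x)\,dx\le\int_\bT W(Du+h_x)$ is not automatic; combining $A_\epsilon\ge B_\epsilon+C_\epsilon$ with $\varliminf A_\epsilon\ge A$ and $\varliminf B_\epsilon\ge B$ gives only $\varliminf A_\epsilon\ge\max(A,\,B+C)$, which does not imply $A\ge B+C$. The paper delegates precisely this identification to \cite[Lemma 2.2]{mury}; a self-contained argument needs either a Minty-type identification of $\Omega_x$ in the subdifferential $\partial E(u)$, or an energy--dissipation-inequality argument exploiting well-prepared initial data ($\int_\bT W^\epsilon(u_{0,x}^\epsilon)\to\int_\bT W(Du_0)$) to upgrade semicontinuity of the energies to convergence. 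As written, your limit passage for (\ref{rn-tw1}) does not close.
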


\begin{proof}
\textit{Step 1.}
After regularizing $\cL$ and $u_0$, we obtain a uniformly parabolic problem,
\begin{equation} \label{rnep}
 \begin{array}{ll}
  \frac{\partial u^\epsilon}{\partial t}= \left( {\cL^\epsilon}(u^\epsilon_x)\right) _x, & (x,t)\in Q_T,\\
u^\epsilon(x,0)= u^\epsilon_0(x), & x\in \bT,
 \end{array}
\end{equation}
where $\epsilon$ is a regularizing parameter. We define
$W^\epsilon$ by the formula below,
\begin{equation}\label{rwe}
 W^\epsilon (p) = \int_0^p \cL^\epsilon(s)\,ds + \frac{\epsilon}{2} p^2. 
\end{equation}
We notice that 
\begin{equation}\label{2.8pol}
W(p) \le W^\epsilon (p) \le W(p) + k \epsilon + \frac{\epsilon}{2} p^2, 
\end{equation}
where $k$ is chosen in an appropriate way.

By the classical theory, see
\cite{lady}, we obtain existence and uniqueness of smooth solutions to
(\ref{rnep}). 

If we multiply (\ref{rnep}) by $u^\epsilon_t$ and integrate over $Q_T$, then we reach,
$$
\int_0^T\int_{\bT} (u^\epsilon_t)^2\, dxdt = 
\int_0^T\int_{\bT} ( {\cL^\epsilon}(u^\epsilon_x))_x u^\epsilon_t\, dxdt.
$$
Integration by parts yields,
$$
\int_0^T\int_\bT (u^\epsilon_t)^2\, dxdt = - \int_0^T\int_\bT  {\cL^\epsilon}(u^\epsilon_x) u^\epsilon_{xt}\, dxdt = - \int_0^T\int_\bT \frac{d}{dt} W^\epsilon(u^\epsilon_x)\, dxdt.
$$
We identify $\bT$ with the interval $[0,1)$. We have to justify, why the boundary terms drop out in the formula above. For this purpose, we recall a 
well-known result, see e.g. \cite{Loj}.

\begin{lemma}
\label{cal-cze}
If $w\in H^1(a,b)$ and $\varphi\in BV(a,b)$, then
$$
\int_a^b w_x\varphi dx = -\int_a^b wD\varphi + \gamma(w\varphi)|_a^b. \eqno\Box
$$
\end{lemma}

Due to the periodicity of the ingredients, the integral over $(a,b)$ equals the integral over $(a+\delta,b+\delta)$ and we can select such $\delta$ that
$x=a+\delta,$ $x=b +\delta$ are points of continuity of $u -v$, as considered over $\bR$.

Hence, we reach the following conclusion, 
\begin{equation}
 \label{rnee}
 \int_0^T\int_\bT (u^\epsilon_t)^2\, dxdt + \int_\bT W^\epsilon(u^\epsilon_x(x,T))\, dx =
 \int_\bT W^\epsilon(u^\epsilon_{0,x})\, dx.
\end{equation}
Now, we will pass to the limit.
First of all, we notice that (\ref{2.8pol}) implies
$$
\int_\bT W^{\epsilon}(u^{\epsilon}_{0,x})
\le k\epsilon |\bT| + 
\sup_{\epsilon\in[0,1]} \int_\bT W(u^{\epsilon}_{0,x})+\int_\bT \frac{\epsilon}{2} |u^{\epsilon}_{0,x}|^2\,dx
=: M < +\infty,
$$
where we used the following inequality,
$$
\frac{\epsilon}{2}  \int_\bT|u^{\epsilon}_{0,x}|^2\,dx \le C \| u_0\|_{BV}^2.
$$
Since we have found a bound on the right-hand-side (RHS) of (\ref{rnee})
independent of $\epsilon$, we conclude that
$$
\int_0^T\int_\bT(u_t^\epsilon)^2\le M
\quad \text{and}
\quad {\sup_{t \in [0, T]}}\int_\bT W^\epsilon(u^\epsilon_x(x,t))\le M.
$$ 

Thus, we can select a subsequence $\{u^\epsilon\}$ such that
$$
u^\epsilon\rightharpoonup u\quad \textrm{in}\quad L^2(0,T;L^2(\bT))
$$
and 
$$
u^\epsilon_t\rightharpoonup u_t\quad \textrm{in}\quad L^2(0,T;L^2(\bT)).
$$
Moreover, if we set $B = W^{1,1}(\bT)$, $X=Y =L^2(\bT)$, then the above estimates show that
family $\{u^\epsilon\}$ is bounded in $L^\infty(0,T,B) \cap L^1(0,T;X)$ and 
$\{\frac{\partial u^\epsilon}{\partial t}\}$ is bounded in $L^1(0,T;Y)$.
Since $BV$ is compactly embedded in $L^2$, then
by Aubin Lemma (see \cite[Corollary 6]{simon}),
we can select a subsequence $\{u^\epsilon\}$ such that
$$
\text{$u^\epsilon \to u$ in $L^p(0,T,L^q(\bT))$,}
$$
where $p,q$ are arbitrary from the interval $(1, \infty)$.
As a result for almost all $t\in(0,T)$
$$\|u^{\epsilon}(\cdot,t)-u(\cdot,t)\|_{L^q} \to 0.$$
Thus, we use the lower semicontinuity of $E$, 
see Proposition \ref{p:2.2}, 
we deduce that for almost all $t>0$
$$\varliminf_{\epsilon \to 0}\int_\bT W^{\epsilon}(u_{x}^{\epsilon})(x,t)dx \ge \varliminf_{\epsilon \to 0}\int_\bT W(u_{x}^{\epsilon})dx\ge \int_\bT W(Du)(\cdot,t).$$ 
Combining these inequalities, we arrive at
\begin{equation} \label{rne2}
 \int_0^t\int_\bT u _t^2(x,s)\, dxds + \int_\bT W(Du)(\cdot,t))\, dx \le M\quad \textrm{for almost all}\ t\in(0,T).
\end{equation}

Moreover, due to (\ref{rngrw}), we have a bound on the $BV$ norm of $u(\cdot,t)$, 
\begin{eqnarray*}
 \displaystyle{\int_\bT|Du|} \le C (1+ \int_\bT W(Du)).
\end{eqnarray*}

We also have to indicate a candidate for $\Omega$, as required by the definition
of a solution. We set 
$$
\Omega^\epsilon(x,t) := \cL^\epsilon(u^\epsilon(x,t)).
$$
Since $u_t^\epsilon = \Omega^\epsilon_x$, then due to (\ref{rnee}), we deduce that
\begin{equation}\label{rnom}
 \| \Omega^\epsilon \|_{L^2(0,T;H^1(\bT))} \le M_1<+\infty.
\end{equation}
Hence, we can select a  subsequence,
$$
\Omega^\epsilon\rightharpoonup \Omega\quad \textrm{in}\quad L^2(0,T;H^1(\bT)).
$$
Moreover,
$$
\int_0^T\int_\bT u_t \varphi\,dxdt = \int_0^T\int_\bT \Omega_x \varphi\,dxdt
\quad\hbox{for all } \varphi\in C_0^\infty((0,T)\times I).
$$
At this point, we may apply \cite[Lemma 2.1]{mury} to conclude that
(\ref{rn-d1}) holds. Moreover, \cite[Lemma 2.2]{mury} implies (\ref{rn-tw1}).

\textit{Step 2.}
We shall establish uniqueness of solutions. 
We notice that if $u$ is a solution to (\ref{e:ge}), according to Definition \ref{d1}, then $t\mapsto u(t)\in L^2(\bT)$ is continuous. In particular it makes sense to evaluate $u$ at $t=0$.

Let us suppose that $u$ and $v$ are solutions to (\ref{e:ge}) satisfying
$u(0)=u_0 = v(0)$. 
Since we identify $\bT$ with $[0,1)$, we notice that for any 
$\delta\in(0,1)$. We have 
\begin{equation}\label{mnm}
\int_{0}^1\left({\Omega}_x({u}) -{\Omega}_x({v})\right)({u}-{v})\,dx = 
\int_{\delta}^{1+\delta}\left({\Omega}_x({u}) -{\Omega}_x({v})\right)({u}-{v})\,dx.
\end{equation}
Moreover, ${\Omega}({u}) -{\Omega}({v})$ belongs to $H^1( \bT)$, while
${u}-{v}$ is a function from $BV(\bT)$. We want to perform
integration  by parts in the RHS of (\ref{mnm}). For this purpose, we use Lemma \ref{cal-cze} with $\delta>0$ so chosen that the boundary terms drop out due to continuity of the integrand at $\delta$.
Thus, we conclude that 
\begin{eqnarray*}
 \frac12 \|{u}-{v}\|_{L^2({I})}^2(T) &= 
\displaystyle{-\int_0^T\int_{{I}+\delta} ({\Omega}({u}; x,t)-{\Omega}({v}; x,t))({u}_x-{v}_x)\,dxdt}.
\end{eqnarray*}
On the other hand, monotonicity of $\cL$ yields
\begin{equation}\label{rn3}
 \frac12 \|{u}-{v}\|_{L^2(\bT)}^2(\tau) \le 0. 
\end{equation}
We conclude that $u=v$, as desired.
\end{proof}

\subsection{Regularity of energy solutions}

We recall the following fact, which will be very useful for us.

\begin{proposition}[\cite{mury}]
\label{t:re}
 Let us suppose that $u_0\in BV$ as well as $u_{0,x}\in BV$. Then there is $\alpha>0$ such that
 $u\in C^{2\alpha, \alpha}$.
\end{proposition}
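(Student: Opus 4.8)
The plan is to derive the Hölder bound on the regularized solutions $u^\epsilon$ solving (\ref{rnep}) \emph{uniformly in $\epsilon$}, and then to pass to the limit. For fixed $\epsilon$ the problem (\ref{rnep}) is uniformly parabolic and its solution is smooth, so all the manipulations below are licit; the entire difficulty is to make the resulting estimates independent of $\epsilon$. The extra hypothesis $u_{0,x}\in BV$ will be used precisely to secure a uniform Lipschitz bound in space, while the energy identity (\ref{rnee}) already established furnishes the control in time.

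First I would obtain a uniform gradient bound. Setting $v=u^\epsilon_x$ and differentiating (\ref{rnep}) in $x$ gives
\begin{equation*}
v_t = \left(\cL^\epsilon(v)\right)_{xx} = (\cL^\epsilon)'(v)\,v_{xx} + (\cL^\epsilon)''(v)\,v_x^2 ,
\end{equation*}
a quasilinear parabolic equation with no zeroth order term, whose leading coefficient $(\cL^\epsilon)'(v)$ is strictly positive thanks to the term $\frac{\epsilon}{2}p^2$ in (\ref{rwe}). Constants are therefore solutions, and the maximum principle (the spatial maximum of $v$ is non-increasing and its minimum non-decreasing) yields $\|u^\epsilon_x(\cdot,t)\|_{L^\infty(\bT)}\le \|u^\epsilon_{0,x}\|_{L^\infty(\bT)}$ for all $t$. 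Choosing the regularization $u^\epsilon_0=u_0*\rho_\epsilon$, mollification does not increase the $L^\infty$ norm of the derivative, and since $BV(\bT)\hookrightarrow L^\infty(\bT)$ in one dimension we obtain $\|u^\epsilon_{0,x}\|_{L^\infty}\le \|u_{0,x}\|_{L^\infty}\le \|u_{0,x}\|_{BV}$. This bound is $\epsilon$-independent, so $\{u^\epsilon\}$ is bounded in $L^\infty(0,T;C^{0,1}(\bT))$, which already delivers the spatial Hölder regularity $C^{2\alpha}$ for every $\alpha\le 1/2$.

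It then remains to convert this into joint regularity in time. From (\ref{rnee}) we have $\int_0^T\int_\bT (u^\epsilon_t)^2\le M$, whence $\|u^\epsilon(\cdot,t)-u^\epsilon(\cdot,s)\|_{L^2(\bT)}\le M^{1/2}|t-s|^{1/2}$. Interpolating this $L^2$-in-time control against the uniform spatial Lipschitz bound --- estimating $|u^\epsilon(x,t)-u^\epsilon(x,s)|$ by comparing $u^\epsilon(x,\cdot)$ with its average over a space interval of length $r$ and optimizing $r\sim|t-s|^{1/3}$ --- produces a uniform modulus $|u^\epsilon(x,t)-u^\epsilon(x,s)|\le C|t-s|^{1/3}$. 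Combined with the spatial estimate this is a uniform $C^{2\alpha,\alpha}$ bound with, say, $\alpha=1/3$. Arzel\`a--Ascoli then extracts a uniformly convergent subsequence whose limit inherits the bound, and by the uniqueness part of Theorem \ref{tw-gl} this limit is the energy solution $u$, so $u\in C^{2\alpha,\alpha}$.

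I expect the main obstacle to be the uniform-in-$\epsilon$ gradient estimate: the differentiated equation for $v=u^\epsilon_x$ is only nondegenerate for fixed $\epsilon$ and degenerates as $\epsilon\to 0$, so one must check that the comparison argument uses solely the $\epsilon$-independent datum $\|u_{0,x}\|_{L^\infty}$ and nothing about the (blowing up) ellipticity. Once this bound is secured, the time regularity via interpolation and the passage to the limit are routine.
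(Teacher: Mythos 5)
The paper does not actually prove this proposition---it imports it from \cite[formula (2.19)]{mury}---and your argument reconstructs essentially the proof given there: a maximum-principle gradient bound for the regularized problems (\ref{rnep}), uniform in $\epsilon$ because it only sees $\|u^\epsilon_{0,x}\|_{L^\infty}\le\|u_{0,x}\|_{L^\infty}$, combined with the energy bound on $u^\epsilon_t$ from (\ref{rnee}) and a Lipschitz--$L^2$ interpolation in time. Your sketch is correct; the only step worth making explicit is that the hypothesis $u_{0,x}\in BV(\bT)$ is used exactly to invoke the one-dimensional embedding $BV(\bT)\hookrightarrow L^\infty(\bT)$ for the good representative of $u_{0,x}$, so that the mollified data have uniformly bounded derivatives.
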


We refer the reader to \cite[formula (2.19)]{mury}.

\subsection{Stability of energy solutions}

We show a very useful result for further studies.

\begin{proposition}\label{t:sta}
Let  us suppose that $(u^n,\Omega^n) $ is a sequence of  energy  solutions to (\ref{e:ge}--\ref{e:ic}),
which is bounded in $L^\infty$,
$u_n $ converges to $u$ almost everywhere and $\Omega^n \rightharpoonup \Omega$ in $L^2(0,T;H^1(I))$. Then, $u$ and $\Omega$ form an energy  solution to (\ref{e:ge}--\ref{e:ic}) with initial data $u_0$.
\end{proposition}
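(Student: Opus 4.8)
The plan is to pass to the limit in each defining property of an energy solution (Definition \ref{d1}), using the given convergences together with the lower semicontinuity and uniqueness results already established. I would organize the argument around the three requirements: the regularity $u\in L^\infty(0,T;BV(\bT))$ with $u_t\in L^2(0,T;L^2(\bT))$, the weak identity \eqref{rn-d1}, and the variational inequality \eqref{rn-tw1}, finishing with the initial condition.

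First I would extract uniform bounds. Since each $(u^n,\Omega^n)$ is an energy solution, the energy inequality \eqref{rne2} holds uniformly (the bound $M$ depends only on $\|u_0\|_{BV}$, which is fixed), so $\sup_n \sup_{t}\int_\bT W(Du^n)(\cdot,t) \le M$ and $\sup_n \int_0^T\int_\bT (u^n_t)^2 \le M$. Combined with the coercivity \eqref{rngrw}, this gives a uniform $L^\infty(0,T;BV)$ bound and a uniform $L^2(0,T;L^2)$ bound on $u^n_t$. From the latter I extract $u^n_t\rightharpoonup \chi$ weakly in $L^2(0,T;L^2(\bT))$; since $u^n\to u$ a.e.\ and the sequence is $L^\infty$-bounded, dominated convergence identifies the distributional time derivative, so $\chi=u_t$ and $u_t\in L^2(0,T;L^2(\bT))$. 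The $BV$ bound plus lower semicontinuity of $E$ (Proposition \ref{p:2.2}), applied for a.e.\ $t$, yields $u(\cdot,t)\in BV(\bT)$ with $\int_\bT W(Du)(\cdot,t)\le M$, hence $u\in L^\infty(0,T;BV(\bT))$.

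Next I would pass to the limit in the weak formulation. For each $n$ and each $\varphi\in C^\infty(\bT)$ one has $\langle u^n_t,\varphi\rangle = -\int_\bT \Omega^n\varphi_x\,dx$ for a.e.\ $t$; testing against a time-dependent $\psi(t)$ and integrating, the left side converges by $u^n_t\rightharpoonup u_t$ and the right side converges by $\Omega^n\rightharpoonup\Omega$ in $L^2(0,T;H^1(\bT))$ (in particular weakly in $L^2(0,T;L^2(\bT))$, which suffices against the fixed function $\varphi_x\psi$). This gives \eqref{rn-d1} for $u,\Omega$. The initial condition transfers because the $L^2(0,T;L^2)$ bound on time derivatives forces $u\in C([0,T];L^2(\bT))$ with continuous dependence, so $u(0)=u_0$.

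The main obstacle is the variational inequality \eqref{rn-tw1}, which is the genuinely nonlinear step: I must show $\int_\bT[W(u_x+h_x)-W(u_x)]\,dx \ge \int_\bT \Omega h_x\,dx$ in the limit. The difficulty is that $\Omega$ is only a weak limit while the inequality couples it to $u_x$, and the left-hand side involves $W$ evaluated on the measure $Du$, which is not continuous under mere a.e.\ convergence of $u^n$. My strategy is to integrate \eqref{rn-tw1} for $u^n$ in time against a nonnegative weight, then pass to the limit: the term $\int\!\int \Omega^n h_x$ converges by weak convergence of $\Omega^n$, while for the energy term I use lower semicontinuity of $v\mapsto \int_\bT W(Dv+h_x)$ to obtain $\varliminf_n\int_\bT W(u^n_x+h_x) \ge \int_\bT W(Du+h_x)$ combined with an upper bound matching $\int_\bT W(Du)$ via the energy identity; here the piecewise-linear structure \eqref{deWi} is essential, since it makes $W(Du)$ a well-defined functional of the measure and makes both one-sided estimates available. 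The cleanest route, if \eqref{rn-tw1} is equivalent to the subdifferential characterization $\Omega_x=u_t\in\partial E[u]$, is to invoke maximal monotonicity of $\partial E$ in $L^2$: weak convergence $u^n\rightharpoonup u$ and $-u^n_t=\nabla E[u^n]\rightharpoonup -u_t$ with the demiclosedness of the graph of $\partial E$ then yields $-u_t\in\partial E[u]$, which is exactly \eqref{rn-tw1}. I would present this monotone-operator argument as the core, falling back on the direct lower-semicontinuity estimate to verify the hypotheses.
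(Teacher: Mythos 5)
Your proposal is correct in substance but takes a genuinely different route from the paper: the paper's entire proof is one sentence, delegating everything to Definition \ref{d1} and the external \cite[Lemma 2.1]{mury}, whereas you carry out the limit passage explicitly. On the linear part your argument works and can even be streamlined: since $u^n_t=\Omega^n_x$ and $\Omega^n\rightharpoonup\Omega$ in $L^2(0,T;H^1(\bT))$, you get $u^n_t\rightharpoonup\Omega_x$ in $L^2(Q_T)$ for free, and the $L^\infty$ bound plus a.e.\ convergence (dominated convergence) identifies this limit as $u_t$; this yields $u_t\in L^2(0,T;L^2(\bT))$ and \eqref{rn-d1} without touching the energy inequality. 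The energy inequality is still what you need for the uniform $BV$ bound and hence $u\in L^\infty(0,T;BV(\bT))$ via Proposition \ref{p:2.2}, but note that it gives a uniform constant only if the initial energies $E[u^n_0]$ are uniformly bounded --- this is not among the stated hypotheses (the $u^n$ in the paper's application have \emph{different} initial data $v_0^k$), so you should add it as an assumption or verify it in the application. Your primary argument for the nonlinear identification --- strong $L^2(Q_T)$ convergence of $u^n$, weak convergence of $-u^n_t\in\partial E[u^n]$, and strong--weak demiclosedness of the maximal monotone graph of $\partial E$ realized on $L^2(Q_T)$ --- is sound and is essentially the content the paper imports from \cite{mury}; only mind the sign ($-u_t\in\partial E[u]$, not $u_t$) and the routine density step needed to pass from smooth perturbations $h$ in \eqref{rn-tw1} to the full subdifferential inequality.

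The one genuine gap is in your fallback route. Passing to the limit in $\int_\bT[W(u^n_x+h_x)-W(u^n_x)]$ requires not only the lower bound $\varliminf_n\int_\bT W(Du^n+h_x)\ge\int_\bT W(Du+h_x)$ but also the upper bound $\varlimsup_n\int_\bT W(Du^n)\le\int_\bT W(Du)$, and this upper semicontinuity of the energy along the sequence does not follow from lower semicontinuity plus the energy inequality: the natural way to force it is to combine the energy \emph{identities} for $u^n$ and for $u$, but the identity for $u$ is a consequence of $-u_t\in\partial E[u]$, which is exactly what you are trying to prove, so the argument as sketched is circular. Present the monotone-operator argument as the proof and either drop the direct estimate or supply a separate proof of energy convergence.
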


\begin{proof}
 Essentially, this follows from our definition of energy solutions and \cite[Lemma 2.1]{mury}.
\end{proof}


\section{Viscosity solutions}
\label{s:visc}

In this section, we review the viscosity solution theory to the singular diffusion equation of the generalized form
\begin{equation}
\label{e:gsde}
\partial_t u+F((W'(u_x))_x) = 0 \quad \text{in $U_T := (0, T)\times U$}.
\end{equation}
Here, $U$ is an open set in $\mathbb{T}$ and $F$ is a function such that
\begin{itemize}
\item[(F1)]
(Continuity)
$F \in \mathit{C}(\bR)$,
\item[(F2)]
(Ellipticity)
$F$ is non-increasing.
\end{itemize}
We refer the readers to the papers \cite{GGR14, GGN13} for the details.

Let $\bar{\mathbb{R}}$ denote extended real numbers $\mathbb{R}\cup\{ \pm\infty \}$,
and for an $\bar{\mathbb{R}}$-valued function $u$ on $U_T$ define its upper (resp.\ lower) semicontinuous envelope $u^*$ (resp. $u_*$) by
$$
\text{$u^*(t, x) := \varlimsup_{(s, y) \to (t, x)}u(s, y)$\qquad (resp.\ $u_*(t, x) := \varliminf_{(s, y) \to (t, x)}u(s, y)$).}
$$

\subsection{The notion of viscosity solutions to (\ref{e:ge})}

We recall a notion of faceted functions. Let 
$P$ denote the set of the jump points of the derivative of $W$.
A function $f \in \mathit{C}^{1}(U)$ is \emph{faceted} at a point $\hat{x} \in U$ with \emph{slope} $p \in \mathbb{R}$ (or $p$-\emph{faceted} at $\hat{x}$)
if there exists a closed nontrivial finite interval $I = [c_{l}, c_{r}] \subset U$ containing $\hat{x}$ (i.e.\ $c_{l}, c_{r} \in U$ satisfy $c_{l} < c_{r}$ and $c_{l} \leq \hat{x} \leq c_{r}$) such that
\begin{align*}
f'(x) = p &\quad \text{for all $x \in I$,} \\
f'(x) \neq p &\quad \text{for all $x \in J\setminus I$}
\end{align*}
for a neighborhood $J = (b_l, b_r)\subset U$ of $I$.
The closed interval $I$ is called a \emph{faceted region} of $f$ containing $\hat{x}$.
Let $C^2_P(U)$ denote the set of all $f \in C^2(U)$ such that $f$ is $p$-faceted at $\hat{x}$ whenever $p = f'(\hat{x}) \in P$.
We also define the \emph{left transition number} $\chi_{l} = \chi_{l}(f, \hat{x})$ and the \emph{right transition number} $\chi_{r} = \chi_{r}(f, \hat{x})$ for a $p$-faceted function $f$ at $\hat{x}$ by
\begin{align*}
\chi_{l} &=
\begin{cases}
+1 &\text{if $f' < p$ on $(b_{l}, c_{l})$,} \\
-1 &\text{if $f' > p$ on $(b_{l}, c_{l})$,} \\
\end{cases}
\quad &
\chi_{r} =
\begin{cases}
+1 &\text{if $f' > p$ on $(c_{r}, b_{r})$,} \\
-1 &\text{if $f' < p$ on $(c_{r}, b_{r})$.} \\
\end{cases}
\end{align*}

Now, for $\Delta > 0$, $I 
\subset U$, $\xi_l, \xi_r \in [-1, 1]$ and $Z \in C^{1, 1}(I)$, we consider the obstacle problem to minimize
$$
J[\xi] :=
\begin{cases}
\int_{I}|\xi'(x)|^{2}\mathit{d}x &\text{if $\xi \in H^1(I)$,} \\
\infty &\text{if $\xi \in L^2(I)\setminus H^1(I)$}
\end{cases}
$$
over
$$
K = \{ \xi \in \mathit{H}^{1}(I) \mid \text{$ \xi -Z\in[ -\frac\Delta2, \frac\Delta2]$ on $I$, 
$\xi(c_{i}) = Z(c_{i}) + (-1)^{k(i)}\chi_{i}\frac\Delta2$
} \},
$$
where $i=l,r$ and $k(l) = 1$, $k(r) = 0$.

We note that it is easy to see that if the function $Z$ is affine, the minimizer is affine as well.
This is the case considered in this paper but, in general, even if $Z \in C^{1, 1}(I)$, then it is known that the unique minimizer belongs to $Z \in C^{1, 1}(I)$;
see \cite{GGR14}.
Also, we note that the derivative of  minimizer $\bar{\xi}$ with $C^1$ smoothness is invariant with respect to shifts of  $Z$ by 
an additive constant.
In view of these remarks, we write
$$
\Lambda^{Z'}_{\chi_{l}\chi_{r}}(x; I, \Delta) = \bar{\xi}'(x) \quad \text{for $x \in I$.}
$$

For $f \in \mathit{C}^{2}_{P}(U)$ and $\hat{x} \in U$,
we define the \emph{nonlocal curvature} $\Lambda_{W}(f)(\hat{x})$ as below.
On the one hand, if $f'(\hat{x}) \notin P$,
we set
$$
\Lambda_{W}(f)(\hat{x}) = W''(f'(\hat{x}))f''(\hat{x})
$$
as expected.
On the other hand, if $p := f'(\hat{x}) \in P$, i.e.\ $f$ is $p$-faceted at $\hat{x}$,
then we set
$$
\Lambda_{W}(f)(\hat{x}) = \Lambda^{\sigma}_{\chi_{l}\chi_{r}}(\hat{x}; I, \Delta)
$$
where $\Delta$ is the jump in the derivative of $W$ at $p$, i.e. 
$ \Delta= \frac{d^+ W}{dq}(p) -  \frac{d^- W}{dq}(p)$, $I = R(f, \hat{x})$, $\chi_{l} = \chi_{l}(f, \hat{x})$, $\chi_{r} = \chi_{r}(f, \hat{x})$, $\sigma(x) = 0$.

Let us denote, by $\mathit{A}_P(U_T)$, the set of all \emph{admissible} functions on $U_T$,
i.e.\ functions $\varphi$ of the form
$
\varphi(t, x) = f(x)+g(t)
$
where $f \in \mathit{C}^{2}_{P}(U)$ and $g \in \mathit{C}^{1}(0, T)$.

\begin{definition}[Viscosity solutions]
\label{d:v}
Let $u$ be an $\bar{\mathbb{R}}$-valued function on $U_T$.
We say that $u$ is a \emph{viscosity subsolution} (resp.\ \emph{supersolution}) of \eqref{e:gsde}
if whenever $\varphi \in \mathit{A}_{P}(U_T)$ and $(\hat{t}, \hat{x}) \in U_T$ satisfy
$$
\max_{U_T}(u^*-\varphi) = (u^*-\varphi)(\hat{t}, \hat{x}) = 0,\qquad
(\hbox{resp.}\quad \min_{U_T}(u_*-\varphi) = (u_*-\varphi)(\hat{t}, \hat{x}) = 0)
$$
then the inequality
$$
\varphi_t(\hat{t}, \hat{x})+F(\Lambda_{W}(\varphi(\hat{t}, \cdot))(\hat{x})) \le 0,\qquad
(\hbox{resp.}\quad \varphi_t(\hat{t}, \hat{x})+F(\Lambda_{W}(\varphi(\hat{t}, \cdot))(\hat{x}))\ge 0)
$$
holds.
We say that $u$ is a \emph{viscosity solution}
if $u$ is both a viscosity subsolution and a viscosity supersolution.
\end{definition}

We close this subsection with an explicit example 
of a viscosity solution starting from general initial data $u_0$, not necessarily continuous.

\begin{example}
Let us consider the total variation flow \eqref{e:tvf} with the initial datum
$$
u_0(x) =
\begin{cases}
1 & \text{if $x \in (0, a)$,} \\
0 & \text{if $x \in (a, 1)$,}
\end{cases}
$$
where $a \in (0, 1)$ is a given constant.
One will realize that this problem \eqref{e:tvf}, \eqref{e:ic} has a solution of the form
$$
u(t, x) =
\begin{cases}
1-\frac{2}{a}t & \text{if $t \le a(1-a)/2$, $x \in (0, a)$,} \\
\frac{2}{1-a}t & \text{if $t \le a(1-a)/2$, $x \in (a, 1)$,} \\
a & \text{if $t \ge a(1-a)/2$.}
\end{cases}
$$
In fact it is not difficult to check that $u$ is a viscosity solution allowing a discontinuity.
\end{example}

\subsection{Comparison and stability of viscosity solutions}

We recall the comparison and stability results applicable in the present work.

\begin{proposition}{\rm (Comparison, \cite[Theorem 7]{GGR14})}
\label{t:vcp}
We assume (F1) and (F2) and we take $U = \mathbb{T}$.
Let $u$ and $v$ respectively be a viscosity sub- and supersolution of \eqref{e:gsde} such that $u^* < +\infty$ and $v_* > -\infty$.
If $u^*|_{t = 0} \le v_*|_{t = 0}$ on $\bT$, then $u^* \le v_*$ in $Q_T$.
\end{proposition}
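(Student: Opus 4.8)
The statement is quoted from \cite[Theorem 7]{GGR14}, so for the complete argument we refer there; here we only sketch the strategy. The plan is to argue by contradiction with the doubling-of-variables method, adapted to the nonlocal faceted structure of $\Lambda_W$. Suppose $M := \sup_{Q_T}(u^* - v_*) > 0$. Replacing $u$ by $u - \sigma/(T-t)$ for small $\sigma > 0$ turns $u$ into a strict subsolution: any admissible test function touching $u^*$ from above now satisfies $\varphi_t + F(\Lambda_W(\varphi)) \le -\sigma/(T-t)^2 < 0$, while $\sup(u^* - v_*) > 0$ persists for $\sigma$ small, and the term $-\sigma/(T-t)\to-\infty$ as $t\to T$ forces the relevant maxima to stay away from $t = T$. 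The hypothesis $u^*|_{t=0} \le v_*|_{t=0}$, together with compactness of $\bT$, keeps them away from $t = 0$ as well, so the supremum is realized in the interior.

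First I would double the variables: for $\epsilon > 0$ set
$$
\Phi_\epsilon(t,x,s,y) = u^*(t,x) - v_*(s,y) - \frac{|x-y|^2}{2\epsilon} - \frac{|t-s|^2}{2\epsilon}
$$
and let $(t_\epsilon, x_\epsilon, s_\epsilon, y_\epsilon)$ maximize it. The usual penalization estimates give $|x_\epsilon - y_\epsilon|^2/\epsilon \to 0$ and $|t_\epsilon - s_\epsilon|^2/\epsilon \to 0$, and the contact points converge to a maximizer of $M$. The spatial penalization, being of the separated form required by $A_P(U_T)$, yields test functions $f(x) = |x-y_\epsilon|^2/(2\epsilon)$ for $u^*$ at $x_\epsilon$ and $-|x_\epsilon - y|^2/(2\epsilon)$ for $v_*$ at $y_\epsilon$, with matching spatial slope $p_\epsilon := (x_\epsilon - y_\epsilon)/\epsilon$ and matching time slope $\tau_\epsilon := (t_\epsilon - s_\epsilon)/\epsilon$.

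The argument then splits on whether $p_\epsilon \in P$. If $p_\epsilon \notin P$, then after localizing $U$ so that, away from the contact point, the test-function slopes avoid the finite set $P$ (so the test functions are genuinely admissible), the operator is the classical degenerate-parabolic one, $\Lambda_W = W''(p_\epsilon)(\cdot)_{xx}$, and the Crandall--Ishii lemma supplies second-order quantities $X \le Y$ in the super- and subjets of $u^*$ and $v_*$ at the two contact points. Since $W'' \ge 0$ we have $W''(p_\epsilon)X \le W''(p_\epsilon)Y$, so ellipticity (F2) gives $F(W''(p_\epsilon)X) \ge F(W''(p_\epsilon)Y)$; subtracting the supersolution inequality from the strict subsolution inequality, the matching $\tau_\epsilon$ cancels and we obtain $0 \le F(W''X) - F(W''Y) \le -\sigma/(T-t_\epsilon)^2 < 0$, a contradiction.

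The main obstacle is the case $p_\epsilon \in P$. Here $f(x) = |x-y_\epsilon|^2/(2\epsilon)$ is strictly convex at $x_\epsilon$, hence \emph{not} $p_\epsilon$-faceted, so $f \notin C^2_P(U)$ and Definition \ref{d:v} cannot even be applied to these parabolic test functions; this is precisely the difficulty the faceted framework is designed to overcome. To handle it I would flatten each test function on a short interval around the contact point, replacing $f$ by a $p_\epsilon$-faceted function whose facet is the minimizer of the obstacle problem $J$ defining $\Lambda_W$, and similarly for the supersolution's test function. The crux is an ordering principle for the nonlocal curvature: when one admissible $p$-faceted function lies above another with compatible facet regions and transition numbers $\chi_l,\chi_r$, the corresponding minimizers of $J$ are ordered, whence $\Lambda_W$ of the upper function does not exceed that of the lower one. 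Feeding this ordering into (F2) reproduces, in the faceted case, the same sign as above and contradicts the strictness gap $\sigma/(T-t_\epsilon)^2$. The delicate points --- verifying that the flattened test functions are admissible and that the transition numbers generated by the doubling are compatible with this comparison --- constitute the technical heart of \cite[Theorem 7]{GGR14}.
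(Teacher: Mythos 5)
The paper offers no proof of this proposition: it is imported verbatim from \cite[Theorem 7]{GGR14}, and your proposal likewise defers to that reference, so there is nothing in the paper to check your argument against. For what it is worth, your sketch of the doubling-of-variables strategy --- perturbing $u$ to a strict subsolution, splitting on whether the slope $p_\epsilon$ at the doubled maximum lies in $P$, and in the faceted case replacing the quadratic penalization by admissible faceted test functions and invoking an ordering property of the obstacle-problem minimizers defining $\Lambda_W$ --- is consistent with the approach of \cite{GGR14}; the technical heart you correctly flag (admissibility of the flattened test functions and compatibility of the transition numbers at the two contact points) is precisely what that reference supplies and what your sketch leaves open, which is acceptable here since the present paper treats the result as an external black box.
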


\begin{proposition}{\rm (Stability under extremum, \cite[Theorems 4.1 and 5.4]{GGN13})}
\label{t:vsup}
Assume (F1) and (F2).
Let $S$ be a family of viscosity subsolutions (resp.\ supersolutions) of \eqref{e:gsde}.
Then, 
$$
u(t, x) := \sup_{v \in S} v(t, x) \qquad(\hbox{resp. } u := \inf_{v \in S} v(t, x))
$$
is a real-valued viscosity subsolution (resp.\ supersolution) of \eqref{e:gsde}.
\end{proposition}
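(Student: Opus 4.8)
The plan is to prove the supremum statement, that $u := \sup_{v \in S} v$ is a viscosity subsolution of \eqref{e:gsde}; the infimum/supersolution assertion then follows by the symmetric argument. I work throughout with the upper semicontinuous envelope and use the standard identity $u^* = \big(\sup_{v \in S} v^*\big)^*$. Suppose $\varphi \in A_P(U_T)$ and $(\hat t, \hat x) \in U_T$ realize $\max_{U_T}(u^*-\varphi) = (u^*-\varphi)(\hat t, \hat x) = 0$; I must derive $\varphi_t(\hat t,\hat x) + F(\Lambda_W(\varphi(\hat t,\cdot))(\hat x)) \le 0$. First I would reduce to a strict maximum by replacing $\varphi$ with $\varphi + \psi$, where $\psi \ge 0$ vanishes to second order at $(\hat t, \hat x)$. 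Already here there is a subtlety absent from the local theory: $\psi$ must be chosen \emph{flat on the faceted region} $I = R(f, \hat x)$ (for instance vanishing on $I$ and strictly positive on a punctured neighbourhood outside $I$), so that the slope $p = f'(\hat x)$, the facet $I$, and the transition numbers $\chi_l, \chi_r$ are all left undisturbed. Having done so, the identity above furnishes a sequence $(t_n, x_n) \to (\hat t, \hat x)$ and $v_n \in S$ with $v_n^*(t_n, x_n) - \varphi(t_n, x_n) \to 0$.

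If $p := f'(\hat x) \notin P$, then $\Lambda_W(\varphi(\hat t,\cdot))(\hat x) = W''(p) f''(\hat x)$ is a purely local quantity, and the situation is exactly the classical one: the strict maximum forces, for large $n$, a nearby genuine maximum of $v_n^* - \varphi$, so the subsolution property of each $v_n$ gives $\varphi_t + F(W''(p) f'') \le 0$ at points converging to $(\hat t, \hat x)$, and continuity of $F$ together with continuity of $W''$ near $p$ closes the inequality in the limit.

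The faceted case $p \in P$ is the \emph{crux}. Here $\Lambda_W(\varphi(\hat t,\cdot))(\hat x) = \Lambda^{\sigma}_{\chi_l \chi_r}(\hat x; I, \Delta)$ is the derivative of the minimizer $\bar\xi$ of the obstacle functional $J$ over the \emph{entire} facet $I = [c_l, c_r]$, so it is nonlocal in $x$, whereas the supremum defining $u$ is pointwise. The obstacle is precisely this mismatch: distinct members of $S$ may realize $u$ at different points of $I$, and no single $v \in S$ need be $p$-faceted over all of $I$ with the same transition behaviour, so one cannot naively pick one subsolution touching $u^*$ from above on the whole facet. To overcome this I would exploit the special structure noted in the text, namely that in our setting $Z = \sigma$ is affine, hence the minimizer $\bar\xi$ is affine and $\Lambda^{\sigma}_{\chi_l\chi_r}(\hat x; I, \Delta)$ degenerates to an explicit slope determined only by $\Delta$, the length of $I$, and the boundary values $(-1)^{k(i)}\chi_i \frac{\Delta}{2}$; using the invariance of $\bar\xi'$ under additive constants I reduce matters to the one-sided behaviour of $f$ just outside $I$. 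I would then approximate $u^*$ near each transition endpoint $c_l, c_r$ by envelopes of suitable $v \in S$, assemble from these an admissible faceted test function for each selected subsolution, invoke its subsolution inequality, and pass to the limit; stability of the degenerate (affine) curvature under this convergence, combined with continuity of $F$, yields the required inequality for $u$.

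Finally I would confirm real-valuedness, i.e. $u < +\infty$: this follows from a local upper barrier, comparing each $v \in S$ against a spatially constant supersolution via Proposition \ref{t:vcp}, which gives $u^* < +\infty$ so that Definition \ref{d:v} genuinely applies. I expect the main difficulty to lie entirely in the faceted case — reconciling the pointwise nature of the supremum with the facet-wide, obstacle-problem definition of $\Lambda_W$ — and I anticipate that its resolution rests on the ordering/monotonicity of the obstacle minimizer and on the fortunate degeneration of the nonlocal curvature to an explicit, convergence-stable slope when the profile $\sigma$ is affine.
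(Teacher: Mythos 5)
The paper offers no proof of this proposition: it is quoted verbatim from \cite[Theorems 4.1 and 5.4]{GGN13}, so there is no internal argument to compare yours against. Judged on its own, your sketch correctly isolates the crux --- the nonlocal, facet-wide definition of $\Lambda_W$ versus the pointwise nature of the supremum --- but it does not resolve it, and the resolution is precisely the content of the cited theorems.

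Concretely, the gap is in the faceted case $p=f'(\hat x)\in P$. After perturbing to a strict maximum you obtain near-maximum points $(s_n,y_n)$ of $v_n^*-\varphi$ converging to $(\hat t,\hat x)$, and you must apply the subsolution inequality for $v_n$ at $(s_n,y_n)$ with some admissible test function. If $y_n$ lies outside the facet $I$ of $\varphi$, then (since $W$ is piecewise linear) $\Lambda_W(\varphi(s_n,\cdot))(y_n)=W''(f'(y_n))f''(y_n)=0$, and the resulting inequality $\varphi_t(s_n,y_n)+F(0)\le 0$ does not imply $\varphi_t(\hat t,\hat x)+F(\Lambda^{\sigma}_{\chi_l\chi_r}(\hat x;I,\Delta))\le 0$ unless one knows the sign relation between $0$ and the nonlocal curvature, which need not hold. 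Even if $y_n\in I$, the function $\varphi$ need not be an admissible test function \emph{for $v_n$} with the same facet and the same transition numbers: $v_n^*-\varphi$ may attain its maximum only on a proper subinterval of $I$, so the test function one can legitimately use for $v_n$ has a different faceted region and possibly different $\chi_l,\chi_r$, hence a different value of $\Lambda_W$. Closing this requires the comparison/monotonicity lemmas for the obstacle-problem minimizer (monotonicity of $\Lambda^{\sigma}_{\chi_l\chi_r}(\cdot;I,\Delta)$ under enlarging $I$ and under changing the transition numbers), which is exactly the machinery of \cite{GGR14,GGN13}; your sentence ``I would then approximate \dots assemble \dots invoke \dots pass to the limit'' names the destination but supplies no argument. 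A secondary point: your perturbation $\varphi+\psi$ must remain in $A_P(U_T)$, i.e.\ of the separated form $f(x)+g(t)$ with $f\in C^2_P$, so the perturbation has to be taken as $\psi_1(x)+\psi_2(t)$ with $\psi_1$ flat on $I$ and preserving the transition numbers; you gesture at the flatness but not at the separation of variables. As written, the proposal is a correct road map with the decisive step missing.
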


In summary, one is able to construct a unique solution of \eqref{e:gsde} with continuous initial data.



\subsection{Related work}

Pioneer work for the very singular diffusion equations including \eqref{e:gsde} is given by Giga-Giga in \cite{GG98} and \cite{GG99},
which study the equations with spatially homogeneous external force.
The authors introduced a notion of viscosity solutions,
and established comparison, existence and stability of solutions in the sense of \cite{GG98}.

The definition of solutions in \cite{GG98} looks different from one in the present paper.
However, these two definitions are equivalent as far as one considers the equation without a spatially inhomogeneous external force.
Note that the value of $\Lambda_W$ is determined explicitly when the external force is independent of the spatial variable.
On the other hand, when the external force term depends on the spatial variable, one will encounter an obstacle problem.
By considering the obstacle problem carefully,
a comparison principle is given in \cite{GGR14},
while a general existence result based on Perron method is given in \cite{GGN13}.

As a further development of the viscosity solutions theory for singular diffusion equations,
we point out that higher-dimensional total variation flows of a non-divergence form is studied in \cite{GGP14}.

\section{Energy solutions are viscosity solutions}

We prove here our main result.

\begin{theorem}
\label{t:ma}
 If $u_0\in BV(\bT)$, then the corresponding energy solution constructed in Theorem \ref{tw-gl} is also a viscosity solution in the sense of Definition \ref{d:v}.
\end{theorem}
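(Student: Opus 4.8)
The plan is to prove the statement in two stages: first for regular initial data, where the energy solution is smooth enough to be tested against admissible functions pointwise, and then for arbitrary $u_0\in BV(\bT)$ by a monotone approximation designed to be compatible with both the energy and the viscosity frameworks.

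First I would treat the case $u_0\in BV(\bT)$ with $u_{0,x}\in BV(\bT)$. By Proposition \ref{t:re} the energy solution $u$ is then H\"older continuous, $u\in C^{2\alpha,\alpha}$, so $u^*=u_*=u$ and the envelopes in Definition \ref{d:v} cause no trouble. Fix an admissible $\varphi(t,x)=f(x)+g(t)$ touching $u$ from above at $(\hat t,\hat x)$. When $p:=f'(\hat x)\notin P$ the operator $\Lambda_W$ reduces to the classical $W''(p)f''$, and the required inequality follows from the equation satisfied a.e.\ by the regular solution. The decisive case is $p\in P$, i.e.\ $\varphi(\hat t,\cdot)$ is $p$-faceted on a region $I=R(\varphi(\hat t,\cdot),\hat x)$: here I would use the selection $\Omega\in L^2(0,T;H^1(\bT))$ of Definition \ref{d1}, which satisfies $\Omega_x=u_t$ and $\Omega(\cdot,t)\in\partial W(u_x(\cdot,t))$ a.e., and identify $\Omega(\hat t,\cdot)$ on $I$ with the minimizer $\bar\xi$ of the obstacle problem defining $\Lambda_W$. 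The constraint $\xi-Z\in[-\Delta/2,\Delta/2]$ and the boundary values encoding the transition numbers $\chi_l,\chi_r$ are read off from the sign of $u-\varphi$ near the endpoints of the facet and from the admissible range of $\partial W$ at $p$; minimality of $\bar\xi$ then yields $\varphi_t+F(\Lambda_W(\varphi(\hat t,\cdot))(\hat x))\le 0$. This identification of the energy selection $\Omega$ with the obstacle minimizer is the main obstacle, and is exactly the coincidence of energy, almost classical, and viscosity solutions announced in the introduction.

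Next I would pass to arbitrary $u_0\in BV(\bT)$. Following the construction announced for Section 4, I would build continuous approximations with derivatives in $BV$, monotone from below and above, $u_0^{n,-}\uparrow u_0$ and $u_0^{n,+}\downarrow u_0$, with uniformly bounded energies $\int_\bT W(u_{0,x}^{n,\pm})$. By the previous stage each corresponding energy solution $u^{n,\pm}$ is a viscosity solution. Since constants solve \eqref{e:ge} and the regularized problems \eqref{rnep} obey the classical parabolic comparison principle, which survives the limit, the sequences are $L^\infty$-bounded and ordered: $u^{n,-}$ increasing, $u^{n,+}$ decreasing, with $u^{n,-}\le u\le u^{m,+}$ for all $n,m$, where $u$ is the energy solution with datum $u_0$. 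Proposition \ref{t:vsup} then shows that $\underline u:=\sup_n u^{n,-}$ is a viscosity subsolution and $\overline u:=\inf_n u^{n,+}$ is a viscosity supersolution. On the other hand, the energy estimate \eqref{rnee} provides the uniform bounds on $\{\Omega^{n,\pm}\}$ in $L^2(0,T;H^1(\bT))$ of \eqref{rnom} needed to invoke Proposition \ref{t:sta}; combined with the uniqueness part of Theorem \ref{tw-gl}, this identifies both $\underline u$ and $\overline u$ with the energy solution $u$ almost everywhere.

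It remains to reconcile the semicontinuous envelopes, which is the delicate point once $u$ is allowed to be discontinuous. Since $\underline u$ is lower semicontinuous, $\overline u$ is upper semicontinuous, $\underline u\le\overline u$ everywhere and $\underline u=\overline u=u$ a.e., I would check that the canonical representative of the energy solution satisfies $u_*=\underline u$ and $u^*=\overline u$, so that the subsolution property of $\underline u$ and the supersolution property of $\overline u$ transfer to $u$, giving that $u$ is a viscosity solution in the sense of Definition \ref{d:v}. This is precisely where the careful monotone choice of the approximations of $u_0$, rather than an abstract semigroup construction, is essential, as emphasized in Section \ref{s:pre}. The comparison principle Proposition \ref{t:vcp} may then be used to confirm that the viscosity solution so obtained is the unique one with these envelopes as data, closing the argument.
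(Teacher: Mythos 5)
Your proposal follows essentially the same route as the paper: monotone continuous approximations of $u_0$ from below and above (Proposition \ref{t:pre}), stability of viscosity sub-/supersolutions under $\sup$ and $\inf$ (Proposition \ref{t:vsup}) combined with stability of energy solutions (Proposition \ref{t:sta}), and uniqueness of energy solutions to identify the two limits with $u$. The only substantive difference is that you sketch a direct verification of the viscosity property for regular data (identifying the selection $\Omega$ with the obstacle-problem minimizer on facets), whereas the paper handles that base case by invoking the regularity of Proposition \ref{t:re} together with the established viscosity theory for continuous data; your extra attention to the semicontinuous envelopes $u^*$, $u_*$ addresses a point the paper passes over silently.
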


The proof is divided in a number of steps.

\subsection{Continuous initial data}

We begin our analysis with the case of continuous initial data $u_0$. Actually, we have seen in Proposition \ref{t:re} that $u_0$, $u_{0,x}\in BV (\bT)$ guarantee that $u \in C^{\alpha,\alpha/2}(\bT\times[0,T])$, for a positive $\alpha$.

\subsection{Preparation of the initial conditions}

We prove here that we can approximate the $BV$ data by continuous functions in an appropriate way.

\begin{proposition}
\label{t:pre}
 Let us suppose that $u_0\in BV(\bT)$.  Then, \\
 (a) there exists an increasing sequence $v_0^k$, $k\in \bN$ of continuous functions and such that $v_0^k \le u_0$ a.e. and 
 $$
 \lim_{k\to 0} v_0^k(x) = u_0(x)\qquad \hbox{for } a.e.\ x\in\bT;
 $$
 (b) there exists a decreasing sequence $w_0^k$, $k\in \bN$ of continuous functions and such that $w_0^k\ge u_0$ a.e. and 
 $$
 \lim_{k\to 0} w_0^k(x) = u_0(x)\qquad \hbox{for } a.e.\ x\in\bT.
 $$
\end{proposition}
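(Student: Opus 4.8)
The plan is to construct the approximating sequences directly, using the structure theory of $BV$ functions on the torus. I will only describe part (a), since part (b) follows by the symmetric construction (or by applying (a) to $-u_0$ and negating). Since $u_0 \in BV(\bT)$, once we identify $\bT$ with $[0,1)$ the function $u_0$ has a precise representative with at most countably many jump discontinuities; moreover $u_0$ is bounded. The idea is to build continuous functions that undershoot $u_0$ everywhere and recover it in the limit, while respecting the monotone (increasing in $k$) requirement.

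The cleanest route I would take is mollification combined with a downward shift. First I would extend $u_0$ periodically to $\bR$ and mollify: set $u_0^\eta = u_0 * \rho_\eta$ for a standard mollifier $\rho_\eta$. These are smooth (hence continuous) and converge to $u_0$ a.e. (at every Lebesgue point, so off a null set) as $\eta \to 0$. The obstacle is that mollification does \emph{not} give $u_0^\eta \le u_0$, nor monotonicity in the parameter. I would fix the one-sided bound by subtracting a controlled error. A convenient device: since $\cL^1$-a.e.\ $x$ is a Lebesgue point, I would consider $\tilde v^\eta(x) = u_0^\eta(x) - \omega(\eta)$ where $\omega(\eta) \to 0$ is chosen so that $\tilde v^\eta \le u_0$ a.e.; however a single constant shift need not work when $u_0$ has large oscillation on fine scales, so this must be handled more carefully.

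A more robust construction avoids mollification entirely and uses the one-sided nature of the ordering. I would instead define, for a decreasing sequence $\eta_k \downarrow 0$,
$$
v_0^k(x) = \inf_{|y - x| \le \eta_k} u_0(y) \, ,
$$
the infimal-convolution-type lower envelope (inf over a shrinking periodic neighborhood). Then automatically $v_0^k \le u_0$ a.e., and $v_0^k$ is nondecreasing in $k$ because the infimum is taken over shrinking sets. The pointwise limit equals the lower precise representative $u_0(x) = \liminf_{y\to x} u_0(y)$, which agrees with $u_0$ a.e.\ (everywhere off the jump set and the continuous-singular support, a null set). The remaining point is continuity of each $v_0^k$: the map $x \mapsto \inf_{|y-x|\le \eta_k} u_0(y)$ is upper semicontinuous as an infimum, but I need genuine continuity, so I would apply a further mollification or a small Lipschitz (inf-convolution with $|x-y|/\delta$) regularization at scale $\delta \ll \eta_k$, which preserves both the bound $\le u_0$ and the monotonicity in $k$ while producing continuous functions.

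The main obstacle I anticipate is reconciling the three requirements simultaneously: continuity, the pointwise one-sided bound $v_0^k \le u_0$ a.e., and strict monotonicity of the family in $k$ together with a.e.\ convergence. Each is easy in isolation, but the standard smoothing that gives continuity tends to break either the sign of the bound or the monotonicity. I expect the inf-envelope approach above to resolve this because the infimum operation is manifestly monotone in the neighborhood radius and manifestly produces a lower bound; the only delicate verification is that after the final Lipschitz/mollification step at a sufficiently small inner scale $\delta = \delta(k)$, the resulting continuous $v_0^k$ still satisfies $v_0^k \le u_0$ a.e.\ and still converges to $u_0$ a.e., which I would secure by sending $\delta(k) \to 0$ fast enough relative to $\eta_k$ and invoking the a.e.\ equality of $u_0$ with its lower approximate limit.
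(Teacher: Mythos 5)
Your construction is correct, but it takes a genuinely different route from the paper. The paper first reduces to monotone functions via the Jordan decomposition $u_0=f^+-f^-$ with $f^\pm$ increasing, and then approximates each monotone piece one-sidedly by a \emph{shifted} mollification $u_k(x)=(\tilde u*\rho_{1/k})(x-\tfrac1k)$: the shift pushes the averaging window to the left of $x$, so monotonicity of $\tilde u$ alone yields both $u_k\le u$ and $u_k\le u_{k+1}$, with a small endpoint correction (Lemmas on $[a,b)$ versus $[a,b]$) before recombining the two pieces. You instead work with $u_0$ directly, via the lower envelope $v_0^k(x)=\inf_{|y-x|\le\eta_k}u_0(y)$ followed by a regularization at a much smaller scale (or, what amounts to the same thing, a Lipschitz inf-convolution $\inf_y\bigl(u_0(y)+|x-y|/\delta\bigr)$, which would give continuity for free). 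Your flagged ``delicate verification'' does go through: for $|z|\le\delta$ one has $B(x,\eta_k-\delta)\subset B(x-z,\eta_k)$, so the mollified envelope is squeezed between $\inf_{|y-x|\le\eta_k+\delta_k}u_0(y)$ and $\inf_{|y-x|\le\eta_k-\delta_k}u_0(y)$; choosing $\eta_{k+1}+\delta_{k+1}\le\eta_k-\delta_k$ gives monotonicity in $k$, the upper bound gives $v_0^k\le u_0$, and the lower bound gives convergence to $\liminf_{y\to x}u_0(y)$, which equals the good representative off the countable jump set. What each approach buys: the paper's argument is more elementary and explicit but relies essentially on one-dimensionality (Jordan decomposition, one-sided shifts); yours avoids any decomposition, treats sub- and super-approximation symmetrically, and would extend verbatim to higher dimensions or to any bounded Borel function that is a.e.\ approximately continuous. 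One small point of hygiene in your write-up: the inf-envelope depends on the choice of representative of $u_0$, so you should fix the precise (good) representative before taking the infimum, as you indeed indicate; and the first mollification-plus-constant-shift idea you float should simply be deleted, since, as you note, it does not work.
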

The proof will be in a few steps.
The first step is the Lemma below.

\begin{lemma}
\label{l:11}
Let us suppose that $u:[a,b)\to \bR$ is increasing, then there exists an increasing sequence of continuous functions $\{u_k\}_{k=1}^\infty$ such that $u_k(x) \le u(x)$ and
$$
\lim_{k\to\infty} u_k(x) = u(x)\qquad\hbox{for }a.e. \ x\in\bT.
$$
\end{lemma}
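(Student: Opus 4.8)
The plan is to approximate an increasing function $u:[a,b)\to\bR$ from below by continuous increasing functions. The key observation is that an increasing function has at most countably many discontinuities, and at every point it has well-defined one-sided limits. My strategy is to build the approximants from simple increasing step functions (or rather piecewise-linear continuous functions) that undershoot $u$ on a finer and finer mesh.

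First I would fix, for each $k$, a partition $a = x_0^k < x_1^k < \dots < x_{N_k}^k = b$ of $[a,b)$ of mesh going to zero, chosen so that the partitions are nested (each refines the previous one) and so that the partition points avoid the discontinuity set of $u$ whenever convenient. On each subinterval $[x_{i}^k, x_{i+1}^k)$ I would set $u_k$ to be the constant value $\inf_{[x_i^k,x_{i+1}^k)} u = u(x_i^k{+})$ (the right limit), which is $\le u(x)$ throughout the subinterval by monotonicity. This produces an increasing step function lying below $u$. To obtain \emph{continuous} approximants rather than step functions, I would instead interpolate: on each subinterval let $u_k$ rise linearly but staying below the infimum of $u$ on the \emph{next} subinterval, or more simply define $u_k$ as a continuous piecewise-linear function whose value at each node $x_i^k$ is $u(x_i^k{-})$ and which is monotone; by monotonicity of $u$ such an interpolant stays $\le u$ on the whole interval.

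The two properties to verify are then monotonicity in $k$ and pointwise convergence a.e. Monotonicity ($u_k \le u_{k+1}$) follows from the nestedness of the partitions: refining a partition can only raise the piecewise-constant/linear lower approximant, since the infimum over a smaller interval is at least the infimum over the larger one containing it. For pointwise convergence, at any continuity point $x$ of $u$ the mesh shrinking to zero forces the nodal values adjacent to $x$ to converge to $u(x)$, hence $u_k(x)\to u(x)$; since the discontinuity set is countable and therefore Lebesgue-null, this gives convergence a.e. I would also need to check the inequality $u_k(x)\le u(x)$ holds at \emph{every} point, which is immediate from using left/right limits and monotonicity as above.

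The main obstacle I anticipate is reconciling \emph{continuity} of the approximants with the requirement that they lie below $u$ everywhere while still converging up to $u$ at continuity points. At a jump point the left approximant must undershoot the lower value $u(x{-})$, and the continuous interpolation must descend across each jump without ever exceeding $u$; the delicate bookkeeping is to arrange the piecewise-linear pieces so that (i) they remain globally increasing, (ii) they stay below $u$, and (iii) the nodal values still climb to $u(x)$ at continuity points as $k\to\infty$. I would handle this by exploiting that $u$ has only countably many jumps, so on any compact set I can make the interpolation nodes cluster appropriately and control the slopes; the estimate that matters is simply that $u(x_{i+1}^k{-}) - u(x_i^k{+}) \to 0$ away from jumps as the mesh refines, which is exactly continuity of $u$ at interior points. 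The statement for the torus $\bT$ identified with $[a,b)=[0,1)$ then follows verbatim.
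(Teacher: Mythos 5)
Your approach (nested partitions plus piecewise-linear lower interpolants) is genuinely different from the paper's, which extends $u$ by constants outside $[a,b)$, mollifies at scale $1/k$, and evaluates at $x-\tfrac1k$: since the convolution window then lies entirely to the left of $x$, monotonicity of $u$ gives both $u_k\le u$ and $u_k\le u_{k+1}$ with no case analysis, and a.e.\ convergence follows from $L^1$ convergence of mollifications together with continuity of the shift in $L^1$. Your route can be made to work, but as written it contains a false step exactly at the point you yourself flag as the main obstacle. The claim that the piecewise-linear interpolant with nodal values $u(x_i^k{-})$ ``stays $\le u$ on the whole interval by monotonicity'' is not true: if $u=0$ on $[x_i^k,c)$ and $u=1$ on $[c,x_{i+1}^k)$ for some interior point $c$, the segment joining $u(x_i^k{-})$ to $u(x_{i+1}^k{-})=1$ exceeds $u$ on part of $[x_i^k,c)$. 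Your proposed remedy --- clustering nodes near the jumps and controlling slopes --- cannot be carried out in general, because the jump set of an increasing function may be dense in $[a,b)$, so no partition avoids having jumps strictly inside its subintervals.

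The gap is repairable without changing your strategy: lag the nodal values by one mesh point, i.e.\ set $u_k(x_i^k):=u(x_{i-1}^k)$ and interpolate linearly. Then on $[x_i^k,x_{i+1}^k]$ every value of $u_k$ lies between $u(x_{i-1}^k)$ and $u(x_i^k)$, hence is $\le u(x_i^k)\le u(x)$ there, so $u_k\le u$ everywhere regardless of where the jumps sit; at a continuity point the lagged nodes still converge to $x$ from the left, giving $u_k(x)\to u(x)$ a.e. Monotonicity in $k$ is also not automatic for the linear interpolants under refinement (it is clear only for the piecewise-constant infima), but this is harmless: replace $u_k$ by $\max\{u_1,\dots,u_k\}$, which is still continuous, still $\le u$, increasing in $k$, and has the same a.e.\ limit. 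With these two fixes your argument is complete; the paper's mollification trick simply packages both monotonicity properties into the one-sided shift.
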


\begin{proof}
We extend $u$ to $\bR$ by the following formula,
$$
\tilde u(x) =
\left\{
\begin{array}{ll}
 u(b^-), &\hbox{for } x\ge b,\\
 u(x), &\hbox{for }  x\in (a,b),\\
 u(a), &\hbox{for }  x\le a.
\end{array}
\right.
$$
Let us denote  by $\rho$ a standard mollifying kernel, whose support is $[-1,1]$. We set
$$
u_k (x) = (\tilde u *\rho_{1/k})(x-\frac1k).
$$
Obviously, $u_k$ are continuous. We have to check its properties.
We notice, that  supp$\rho\subset[-1,1]$ implies 
$$
u_k (x) = \int_\bR u(x-(y+1)/k) \rho(y)\,dy \le \int_\bR u(x) \rho(y)\,dy =  u(x) .
$$
By the same argument, we have
$$
u_{k+1}(x) \ge u_k (x).
$$

We know that $u*\rho_{1/k}$ converges to $u$ in $L^1$, hence there exists a subsequence, still denoted by $u_k$ converging to $u$ almost everywhere. Since the shift operator, 
$v(\cdot)\mapsto v(\cdot-h)$ is continuous in $L^1$, we conclude that $u_k(x)$ converges to $u(x)$ for almost every $x\in [a,b)$.
\end{proof}

\begin{lemma}\label{l:12}
 Let us suppose that $f:[a,b]\to \bR$ is such that  $f$ restricted to $[a,b)$ is increasing and $f(b) \in [f(a), f(b^-))$. Then, there exists an increasing sequence $\{f_k\}_{k=1}^\infty$ converging almost everywhere to $f$.
\end{lemma}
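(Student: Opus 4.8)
The plan is to combine Lemma \ref{l:11}, applied to the increasing part $f|_{[a,b)}$, with a family of continuous \emph{cap functions} that pull the value down to the prescribed height $f(b)$ at the right endpoint, arranged so that neither the monotonicity in $k$ nor the a.e.\ convergence is destroyed. First I would apply Lemma \ref{l:11} to $f|_{[a,b)}$ to obtain an increasing sequence of continuous functions $g_k$ with $g_k \le f$ on $[a,b)$ and $g_k(x) \to f(x)$ for a.e.\ $x \in [a,b)$. Since the construction in Lemma \ref{l:11} mollifies the extension of $f$ that is constant equal to $f(b^-)$ to the right of $b$, the functions $g_k$ extend continuously to the closed interval $[a,b]$ with $g_k(a) = f(a)$ and $g_k(b) \to f(b^-)$; in particular $g_k(b) \ge f(b)$ for all large $k$, because $f(b) < f(b^-)$ by hypothesis.

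Next I would choose points $b_k \uparrow b$ and define continuous cap functions $\phi_k$ on $[a,b]$ by $\phi_k \equiv f(b^-)$ on $[a,b_k]$, $\phi_k(b) = f(b)$, and $\phi_k$ affine on $[b_k,b]$. Because $f(b) < f(b^-)$, choosing the $b_k$ strictly increasing makes the affine ramp of $\phi_{k+1}$ start closer to $b$ than that of $\phi_k$, and since both ramps end at the common value $f(b)$ at $x=b$, the later ramp lies above the earlier one; hence $\phi_{k+1} \ge \phi_k$ and $\phi_k \uparrow \phi_\infty$, where $\phi_\infty \equiv f(b^-)$ on $[a,b)$ and $\phi_\infty(b) = f(b)$. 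I would then set $f_k := \min(g_k, \phi_k)$. Continuity of $f_k$ is immediate, and $f_k \le g_k \le f$ on $[a,b)$ while $f_k(b) = \min(g_k(b),f(b)) \le f(b)$, so $f_k \le f$ throughout. On $[a,b_k]$ we have $\phi_k = f(b^-) \ge f \ge g_k$ (using that $f \le f(b^-)$ on $[a,b)$), so $f_k = g_k$ there, which is what secures convergence to the large values of $f$ near $b^-$. Since $g_k \to f$ a.e.\ and $\phi_k \to \phi_\infty \ge f$ pointwise on $[a,b)$, we get $f_k = \min(g_k,\phi_k) \to \min(f,\phi_\infty) = f$ a.e., and at the endpoint $f_k(b) \to \min(f(b^-),f(b)) = f(b)$.

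The step I expect to be the main obstacle is reconciling two competing requirements: the sequence must increase in $k$, yet it must simultaneously be forced down to the prescribed endpoint value $f(b) < f(b^-)$, which is precisely the mismatch that makes the naive approximation of Lemma \ref{l:11} insufficient. The device that resolves this is writing $f_k$ as the minimum of \emph{two} sequences that are each increasing in $k$: the monotonicity $g_k \uparrow$ comes from Lemma \ref{l:11}, and the nesting $\phi_k \uparrow$ is engineered by letting the corrective ramp start ever later while pinning it to $f(b)$ at $b$, so that the pointwise minimum automatically inherits monotonicity in $k$ while still delivering $f_k(a) = f(a)$ and $f_k(b) = f(b)$ for the periodic gluing.
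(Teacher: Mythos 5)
Your argument is correct, and it shares the paper's skeleton — apply Lemma \ref{l:11} to the increasing part $f|_{[a,b)}$ and then correct near the right endpoint by a linear piece — but the correction mechanism is genuinely different. The paper simply replaces $u_k$ on $[b-1/k,b]$ by the affine ramp joining $\bigl(b-\tfrac1k,\,u_k(b-\tfrac1k)\bigr)$ to $\bigl(b,\,f(b^-)\bigr)$ and asserts the required properties are easy to see; in particular its $f_k$ takes the value $f(b^-)$ at $x=b$ (harmless for a.e.\ convergence), and the monotonicity $f_{k+1}\ge f_k$ on the overlap strip $[b-\tfrac1k,\,b-\tfrac1{k+1})$ requires comparing $u_{k+1}$ with the $k$-th ramp, which is not automatic. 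Your device of writing $f_k=\min(g_k,\phi_k)$ with a nested family of caps pinned to $f(b)$ at $b$ buys two things: monotonicity in $k$ is inherited for free from the minimum of two increasing sequences, and the approximants actually attain the prescribed endpoint value $f(b)$ rather than $f(b^-)$, which is the role the hypothesis $f(b)<f(b^-)$ plays. The only caveat is cosmetic: $f_k(b)=\min(g_k(b),f(b))$ equals $f(b)$ only once $g_k(b)\ge f(b)$, i.e.\ for all sufficiently large $k$, so if the exact endpoint value is wanted for every $k$ one should discard finitely many terms; this does not affect the lemma as stated.
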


\begin{proof}
We can take $u= f|_{[a,b)}$. Lemma \ref{l:11} yields an increasing sequence $u_k$ converging a.e. to $u$. We define it by the formula below,
$$
f_k(x) = \left\{
\begin{array}{ll}
 u_k(x) & x\in [a, b-1/k),\\
 \ell_k(x-b) + f(b^-) & x\in [b-1/k,b],
\end{array}
\right.
$$ 
where 
$$
\ell_k = k(f(b^-) - u_k(b-1/k)).
$$
It is easy to see that $f_k$ has the desired properties.
\end{proof}

\begin{lemma}
\label{l:13}
 Let us suppose that $f:[a,b]\to \bR$ is such that  $f$ restricted to $[a,b)$ is decreasing and $f(b) \in [f(b^-), f(a))$. Then, there exists a decreasing sequence $\{f_k\}_{k=1}^\infty$ converging almost everywhere to $f$.
 \end{lemma}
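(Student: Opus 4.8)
The plan is to obtain Lemma~\ref{l:13} as the decreasing mirror image of Lemma~\ref{l:12}, reducing to it by reflection. Set $g := -f$ on $[a,b]$. Since $f$ restricted to $[a,b)$ is decreasing, $g$ restricted to $[a,b)$ is increasing, and one has $g(a) = -f(a)$, $g(b^-) = -f(b^-)$. Under reflection the hypothesis $f(b)\in[f(b^-),f(a))$ becomes $g(b) = -f(b)\in(g(a), g(b^-)]$, which, up to the endpoint issue discussed below, is exactly the admissibility condition $g(b)\in[g(a), g(b^-))$ required by Lemma~\ref{l:12}. First I would apply Lemma~\ref{l:12} to $g$ to produce an increasing sequence $\{g_k\}_{k=1}^\infty$ of continuous functions with $g_k\nearrow g$ almost everywhere on $[a,b]$, and then set $f_k := -g_k$. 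Negation turns the increasing sequence into a decreasing one, $f_{k+1}\le f_k$, preserves continuity, and gives $f_k\searrow -g = f$ almost everywhere; moreover, if the $g_k$ are taken with $g_k\le g$ (as provided by the construction behind Lemma~\ref{l:11} and Lemma~\ref{l:12}), then $f_k\ge f$, which is the inequality needed when the lemma is fed into Proposition~\ref{t:pre}(b).

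The one point that needs care, and the only real obstacle, is the endpoint mismatch at $b$: the reflected condition gives $g(b)\le g(b^-)$, whereas Lemma~\ref{l:12} is stated with the strict inequality $g(b)<g(b^-)$, so the degenerate case $f(b)=f(b^-)$ (left continuity of $f$ at $b$) is not literally covered. To dispatch it I would observe that the construction underlying Lemma~\ref{l:12} is robust: the value it assigns at the single point $b$ is irrelevant for almost-everywhere convergence because $\{b\}$ is $\cL^1$-null, so one may replace $f(b)$ by any value in $[f(b^-),f(a))$ without changing the conclusion, or equivalently run the direct construction described next.

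Alternatively, to keep the argument self-contained, I would mirror the proof of Lemma~\ref{l:12} directly. Applying Lemma~\ref{l:11} to $-u$, with $u := f|_{[a,b)}$, yields a decreasing sequence of continuous functions $u_k\ge u$ with $u_k\to u$ almost everywhere. I would then patch in a linear piece near $b$,
$$
f_k(x) = \begin{cases} u_k(x), & x\in[a, b-1/k),\\ \ell_k(x-b) + f(b^-), & x\in[b-1/k, b],\end{cases}
\qquad \ell_k = k\bigl(f(b^-) - u_k(b-1/k)\bigr),
$$
which is continuous at $x=b-1/k$ (there $f_k(b-1/k)=u_k(b-1/k)$) and has nonpositive slope $\ell_k\le 0$, because $u_k(b-1/k)\ge u(b-1/k)\ge f(b^-)$ by monotonicity of $f$; hence each $f_k$ is decreasing in $x$.

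The remaining verifications are routine and identical in spirit to those in Lemma~\ref{l:12}. For almost-everywhere convergence, for any fixed $x<b$ one eventually has $x<b-1/k$, so $f_k(x)=u_k(x)\to f(x)$, while the negligible point $b$ is ignored. For monotonicity in $k$ one uses $u_{k+1}\ge u_k$ together with a comparison of the two linear patches on the overlap $[b-1/k,b]$, exactly as in the increasing case. I expect no difficulty beyond bookkeeping in these steps, the substantive content being entirely carried by Lemma~\ref{l:11} and the reflection.
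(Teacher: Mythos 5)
Your proposal is correct and follows essentially the same route as the paper, whose entire proof of this lemma is the single line ``apply Lemma~\ref{l:12} to $-f$.'' Your additional observation that the reflected hypothesis on $-f$ at the endpoint $b$ reads $(-f)(b)\in\bigl((-f)(a),(-f)(b^-)\bigr]$ and so does not literally match the half-open condition $[(-f)(a),(-f)(b^-))$ required in Lemma~\ref{l:12} is a genuine, if minor, point that the paper glosses over, and your fix --- noting that the value on the $\cL^1$-null set $\{b\}$ is immaterial for almost-everywhere convergence, or alternatively mirroring the construction of Lemma~\ref{l:12} directly --- is sound.
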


\begin{proof}
This result follows from Lemma \ref{l:12} applied to $-f$.
\end{proof}
 
Now, we are ready to prove Proposition \ref{t:pre}. It is sufficient to establish (a) since (b) will follow from (a) applied to $-u_0$.

If we are given $u_0\in BV(\bT)$, then we may choose its `good representative' see \cite[Theorem 3.28]{AFP00} and we define $f:[0,1]\to \bR$ by the following formula,
$$
f(x) =\left\{
\begin{array}{ll}
 u_0(x) & x \in [0,1)\\
 u_0(0) & x =1.
\end{array}
\right.
$$
Of course, $f$ has a finite variation over $(0,1)$, hence $f = f^+ -f^-$, where $f^\pm$ are increasing. By Lemmas \ref{l:12} and \ref{l:13}, there exist $v^+_k, v^-_k, w^+_k, w^-_k$ such that
$$
v^+_k \le f^+ \le w^+_k, \qquad v^-_k \le f^- \le w^-_k.
$$
We set $v_k = v^+_k - w^-_k$ and  $w_k = w^+_k - v^-_k$, these sequences monotonically converge to $u_0$ almost everywhere. Obviously, by the definition, we have
$$
v_k \le f^+ -f^- \le w_k . \eqno\Box
$$

\subsection{$\mathit{BV}$ initial data}

Let us consider
$v^\epsilon$ a unique  continuous energy solution corresponding to  $v^\epsilon_0$, given by Proposition \ref{t:pre}. Respectively, let
$w^\epsilon$ be a unique continuous energy solution corresponding to  $w^\epsilon_0$, given by Proposition \ref{t:pre}. We set
$$
v = \sup v^\epsilon, \qquad w = \sup  w^\epsilon.
$$
\begin{lemma}
 If $v$ and $w$ are defined above, then $v$ and $w$  are energy solutions.
\end{lemma}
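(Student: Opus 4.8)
The plan is to realize $v$ and $w$ as monotone limits of the families $v^\epsilon$, $w^\epsilon$ and to invoke the stability result, Proposition~\ref{t:sta}. Since the approximating data $v^\epsilon_0$, $w^\epsilon_0$ produced in Proposition~\ref{t:pre} are continuous and piecewise smooth, we have $v^\epsilon_{0,x}, w^\epsilon_{0,x}\in BV(\bT)$; hence Theorem~\ref{tw-gl} yields each $v^\epsilon$, $w^\epsilon$ as an energy solution with an associated flux $\Omega^\epsilon$ obeying the energy estimate~(\ref{rne2}) and the bound~(\ref{rnom}), while Proposition~\ref{t:re} shows these solutions are (H\"older) continuous. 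It then remains to verify the three hypotheses of Proposition~\ref{t:sta} for $\{(v^\epsilon,\Omega^\epsilon)\}$ and, symmetrically, for $\{(w^\epsilon,\Omega^\epsilon)\}$: a uniform $L^\infty$ bound, almost everywhere convergence $v^\epsilon\to v$, and weak convergence $\Omega^\epsilon\rightharpoonup\Omega$ in $L^2(0,T;H^1(\bT))$.

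First I would establish monotonicity and the $L^\infty$ bound. Constant functions are energy solutions, so the comparison principle for (\ref{e:ge})--(\ref{e:ic}) --- which for these solutions follows from the classical maximum principle applied to the uniformly parabolic regularizations~(\ref{rnep}) and then passing to the limit (alternatively, from the viscosity comparison Proposition~\ref{t:vcp}, as the $v^\epsilon$ have continuous data) --- gives $\min_\bT v^1_0\le v^\epsilon\le\|u_0\|_{L^\infty}$ uniformly in $\epsilon$. The same comparison, applied to the ordered data $v^\epsilon_0\le v^{\epsilon'}_0$, shows that $\epsilon\mapsto v^\epsilon$ is monotone. Being monotone and uniformly bounded, $v^\epsilon$ converges pointwise, hence almost everywhere, to the finite function $v=\sup_\epsilon v^\epsilon$.

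Next I would produce the uniform energy estimates controlling the flux. The construction in Proposition~\ref{t:pre} (mollification together with linear connectors of vanishing total variation) keeps the total variations of $v^\epsilon_0$ bounded by $|Du_0|(\bT)+o(1)$; combined with the linear growth~(\ref{rngrw}) of $W$ this yields $\sup_\epsilon\int_\bT W(v^\epsilon_{0,x})<\infty$. The energy estimate~(\ref{rne2}) then bounds $\|v^\epsilon_t\|_{L^2(Q_T)}$ uniformly, while boundedness of $\cL$ bounds $\Omega^\epsilon$ in $L^\infty$; since $\Omega^\epsilon_x=v^\epsilon_t$, the bound~(\ref{rnom}) becomes uniform in $\epsilon$, and one extracts a weak limit $\Omega^\epsilon\rightharpoonup\Omega$ in $L^2(0,T;H^1(\bT))$. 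Proposition~\ref{t:sta} then shows $(v,\Omega)$ is an energy solution; because $v^\epsilon_0\to u_0$ a.e.\ and energy solutions are continuous in time into $L^2(\bT)$, the initial trace of $v$ is $u_0$. The argument for $w$ is identical, using the decreasing family $w^\epsilon$ and $w=\inf_\epsilon w^\epsilon$ (the $\sup$ in the statement should read $\inf$, since $w^\epsilon_0$ decreases to $u_0$).

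I expect the main obstacle to be the monotone-convergence step, namely securing order preservation so that $v^\epsilon\to v$ almost everywhere: without it the supremum need not be attained as a limit and Proposition~\ref{t:sta} does not apply. A secondary technical point is the uniform control of $\int_\bT W(v^\epsilon_{0,x})$, which must be read off from the explicit approximation in Proposition~\ref{t:pre}, since lower semicontinuity (Proposition~\ref{p:2.2}) only supplies the opposite inequality $E[u_0]\le\varliminf_{\epsilon}E[v^\epsilon_0]$.
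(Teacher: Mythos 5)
Your proposal is correct and follows essentially the same route as the paper: the paper's proof is simply that monotonicity makes $v$ and $w$ pointwise (hence a.e.) limits of $v^\epsilon$ and $w^\epsilon$, after which Proposition~\ref{t:sta} applies. You additionally verify the hypotheses of Proposition~\ref{t:sta} (uniform $L^\infty$ bound and order preservation via comparison, uniform control of $\int_\bT W(v^\epsilon_{0,x})$ from the construction in Proposition~\ref{t:pre}, and the resulting weak compactness of the fluxes $\Omega^\epsilon$) which the paper leaves implicit, and you rightly observe that $w$ should be $\inf_\epsilon w^\epsilon$ rather than $\sup_\epsilon w^\epsilon$.
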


\begin{proof} Due to monotonicity of families $v^\epsilon$ and $w^\epsilon$, we see that $v$ and $w$ are in fact pointwise limits, so that we can use
Proposition \ref{t:sta} implying that
$v$ and $w$ are energy solutions.
Hence, our claim follows.
\end{proof}

\begin{lemma}
$v$ is a viscosity subsolution and $w$ is a viscosity supersolution of \eqref{e:ge}, respectively.
\end{lemma}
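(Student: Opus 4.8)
The plan is to show that the supremum $v = \sup_\epsilon v^\epsilon$ is a viscosity subsolution by combining the established stability results with the continuous-data case, and dually for $w$. The architecture of the argument rests on two facts already available to us: first, Proposition \ref{t:re} guarantees that each $v^\epsilon_0 \in BV$ with $(v^\epsilon_0)_x \in BV$ yields a classical (hence viscosity) solution $v^\epsilon$ with $C^{2\alpha,\alpha}$ regularity; second, Proposition \ref{t:vsup} asserts that a supremum of viscosity subsolutions is again a viscosity subsolution. So the skeleton is essentially: each $v^\epsilon$ is a viscosity \emph{sub}solution, and their supremum $v$ inherits that property by Proposition \ref{t:vsup}.

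The crux of the matter, therefore, is establishing that each individual continuous energy solution $v^\epsilon$ is a viscosity subsolution of \eqref{e:ge}. Here I would invoke the continuous-data analysis promised in Subsection 4.1: since $v^\epsilon_0$ is continuous and (being built from the mollified monotone pieces in Lemma \ref{l:11}) smooth enough that $v^\epsilon_0, (v^\epsilon_0)_x \in BV$, Proposition \ref{t:re} gives $v^\epsilon \in C^{2\alpha,\alpha}$, and the energy solution is then in fact classical. For a classical solution the identification of the energy formulation with the viscosity formulation must be checked directly against Definition \ref{d:v}: when an admissible test function $\varphi(t,x) = f(x) + g(t)$ touches $v^\epsilon$ from above at $(\hat t,\hat x)$, one must verify $\varphi_t(\hat t,\hat x) + F(\Lambda_W(\varphi(\hat t,\cdot))(\hat x)) \le 0$. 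Away from the facet points $P$ this is the pointwise PDE $\partial_t u = (W'(u_x))_x = W''(u_x)u_{xx}$, which a classical solution satisfies with equality; at a point where $f'(\hat x) \in P$ the nonlocal curvature $\Lambda_W$ is defined through the obstacle problem, and one compares the slope of $v^\epsilon$ on the facet with the minimizer $\bar\xi$, using the transition numbers $\chi_l,\chi_r$ to control the sign of the curvature term. The monotonicity $v^\epsilon_0 \le u_0$ together with the subsolution touching condition drives the correct inequality direction.

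I expect the main obstacle to be precisely this matching of $\Lambda_W$ against the classical evolution at faceted points. The subtlety is that the energy solution's flux $\Omega^\epsilon = \cL^\epsilon(v^\epsilon_x)$ encodes the nonlocal behaviour of the subdifferential only in the singular limit $\epsilon \to 0$, whereas the regularized $v^\epsilon$ satisfies a genuine uniformly parabolic equation with no facets. One must argue that the $C^{2\alpha,\alpha}$ solution of the \emph{unregularized} problem \eqref{e:ge} (the limit object, which is what $v^\epsilon$ denotes in this subsection) does respect the obstacle-problem definition of $\Lambda_W$; this is exactly the content of the known equivalence between energy/almost-classical solutions and viscosity solutions for regular data, alluded to in the introduction. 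Concretely I would reduce to the observation that for an affine obstacle $Z$ the minimizer is affine, so $\Lambda^\sigma_{\chi_l\chi_r}$ reduces to an explicit constant, and then check that the flux $\Omega$ realizes this constant along the facet by its $H^1$ regularity and the constraint $\Omega_x = \partial_t v^\epsilon$.

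Once each $v^\epsilon$ is a viscosity subsolution, the conclusion for $v = \sup_\epsilon v^\epsilon$ is immediate from Proposition \ref{t:vsup}, which also guarantees $v$ is real-valued; the dual statement for $w = \sup_\epsilon w^\epsilon$ (or properly $\inf$, for the supersolution) follows by the symmetric argument applied to the decreasing family $w^\epsilon$ via $-u_0$, as was done throughout Proposition \ref{t:pre}. The only bookkeeping point to confirm is that the supremum is taken over the correct monotone family so that the semicontinuous envelopes align with the initial data, which is handled by the a.e.\ convergence $v^\epsilon_0 \to u_0$ established in Proposition \ref{t:pre}.
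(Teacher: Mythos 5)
Your proposal follows essentially the same route as the paper: the paper's entire proof of this lemma is the one-line observation that it ``follows immediately from Proposition \ref{t:vsup}'', with the fact that each continuous $v^\epsilon$ is a viscosity (sub)solution being delegated to Subsection 4.1 and the cited Giga--Giga theory for continuous data rather than verified directly against Definition \ref{d:v} as you sketch. Your side remark that the decreasing family should yield $w=\inf w^\epsilon$ (the paper writes $\sup w^\epsilon$, evidently a typo) is correct.
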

\begin{proof}
 This lemma follows immediately from Proposition \ref{t:vsup}.
\end{proof}

{\it Proof of Theorem \ref{t:ma}.} We showed the following inequality for viscosity subsolution $v$ and viscosity supersolution $w$,
$$v\le w.$$ At the same time $v$ and $w$ are energy solutions to (\ref{e:ge}) with the same initial condition $u_0$. Uniqueness of energy solutions implies $v=w$, hence energy solutions to (\ref{e:ge}-\ref{e:ic}) are viscosity solutions to this equation. $\Box $

\begin{remark}
 We constructed in Theorem \ref{tw-gl}  solutions to (\ref{e:ge})--(\ref{e:ic}) by the regularization method for data in $BV$. Then, we showed  in Theorem \ref{t:ma} that these solutions are also viscosity solutions. In 
\cite[Theorem 1]{mury} by the same method, we constructed solutions to (\ref{e:ge})--(\ref{e:ic}) assuming more regular data, i.e. $u_0$ and $u_{0,x}\in BV$ and we showed that they are {\it almost classical}, \cite[Theorem 3]{mury}, see also \cite{mury-cb}. Our  Theorem \ref{t:ma} implies that all these types of solutions coincide, at least for more regular initial conditions. Finally, we point
out that they are in fact solutions in the sense of the nonlinear semigroup theory, i.e. in a weak sense.
\end{remark}

\section*{Acknowledgement}
Both authors thank the grant agencies for partial support during the preparation of this paper and the universities, which hosted them. AN was supported by a Grant-in-Aid for JSPS 
Fellows No. 25-7077 as well as by NCN  through grant 2011/01/B/ST1/01197
and by MNiSW through 2853/7.PR/2013/2 grant. 
PR was in part supported by the EC  IRSES program "Flux" and  by NCN  through grant 2011/01/B/ST1/01197.
A part of the research was performed during AN visits to the University of Warsaw supported by the
Warsaw Center of Mathematics and Computer Science through the program `Guests'. 
Some work was done during a PR visit to the University of Tokyo.


\begin{thebibliography}{10}
%
\bibitem{AFP00}
L.~Ambrosio, N.~Fusco, D.~Pallara,
\emph{Functions of bounded variation and free discontinuity problems},
The Clarendon Press, Oxford University Press, New York,
2000.
%
\bibitem{AmPa}
L.~Ambrosio, , D.~Pallara,
\emph{Integral representations of relaxed functionals on $BV(\bR^n,\bR^k)$ and polyhedral approximation},
Indiana Univ. Math. J.  {\bf 42} (1993), no. 2, 295--321. 
%
\bibitem{dirichlet.andreu} 
{F.Andreu, V.Caselles, J.M.Maz\'on, S.Moll},
The Dirichlet problem associated to the relativistic heat equation,
{\it Mathematische Annalen,}  {\bf 347}, no 1, (2010), 135-199.

%
\bibitem{med1}
F.Andreu, C.Ballester, V.Caselles, J.M. Maz\'on,  The Dirichlet
problem for the total variation flow. {\it J. Funct. Anal.}  {\bf 180},
 (2001), 347--403.
%
\bibitem{med-book}
F.Andreu-Vaillo, V.Caselles, V., J.M.Maz\'on,  Parabolic quasilinear
equations minimizing
linear growth functionals. Birkh\"auser, Basel,  2004
%
\bibitem{BeCaNo}
G.Bellettini, V.Caselles, M.Novaga, M. The total variation flow in $\bR^N$. J. Differential Equations {\bf 184} (2002), no. 2, 475--525.
%
\bibitem{valadier}
G.Bouchitt\'e, M.Valadier, 
Integral representation of convex functionals on a space of measures,
J. Funct. Anal. {\bf 80} (1988), no. 2, 398--420.
%
\bibitem{brezis}
H.Br\'ezis,
Op\'erateurs maximaux monotones et semi-groupes de contractions dans les espaces de
Hilbert.
North-Holland Mathematics Studies, No. 5. Notas de Matem\'atica (50). North-Holland
Publishing Co., Amsterdam-London; American Elsevier Publishing Co., Inc., New York, 1973.
%
\bibitem{briani-novaga}
A.Briani, A.Chambolle, M.Novaga, G.Orlandi,
On the gradient flow of a one-homogeneous functional,  
{\it Confluentes Math.}, {\bf  3}, (2011), 617--635

%
%
%
\bibitem{CGG91}
Y.~G.~Chen, Y.~Giga, S.~Goto,
\emph{Uniqueness and existence of viscosity solutions of generalized mean curvature flow equations},
J.\ Differential Geom.\ \textbf{33} (1991), no. 3, 749--786.

\bibitem{CIL92}
M.~G.~Crandall, H.~Ishii, P.-L.~Lions,
\emph{User's guide to viscosity solutions of second order partial differential equations},
Bull.\ Amer.\ Math.\ Soc.\ \textbf{27} (1992), no.\ 1, 1--67.
%
\bibitem{Cral} 
M.~G.~Crandall, P.-L.Lions, Viscosity solutions of Hamilton-Jacobi equations, Trans. Amer. Math. Soc. 
{\bf 277 }(1983), no. 1, 1--42.
%
\bibitem{ES91}
L.~C.~Evans, J.~Spruck,
\emph{Motion of level sets by mean curvature. I},
J.\ Differential Geom.\ \textbf{33} (1991), no.\ 3, 635--681.
%
\bibitem{fonseca}
I.Fonseca, P.Rybka, 
Relaxation of multiple integrals in the space $BV(\Omega,\bR^p)$,
Proc. Roy. Soc. Edinburgh Sect. A {\bf 121} (1992), no. 3-4, 321--348. 
%
\bibitem{fukui}
T.Fukui, Y.Giga, \emph{Motion of a graph by nonsmooth weighted
curvature}. Lakshmikantham, V. (ed.), World congress of nonlinear
analysis '92. Proceedings of the first world congress, Tampa, FL, USA,
August 19-26, 1992. Berlin: de Gruyter. 47-56 (1996).
%
\bibitem{GG98}
M.-H.~Giga, Y.~Giga,
\emph{Evolving graphs by singular weighted curvature},
Arch.\ Ration.\ Mech.\ Anal.\ \textbf{141} (1998), no.~2, 117--198.

\bibitem{GG99}
M.-H.~Giga, Y.~Giga,
\emph{Stability for evolving graphs by nonlocal weighted curvature},
Comm.\ Partial Differential Equations \textbf{24} (1999), no.~1-2, 109--184.

\bibitem{GG01}
M.-H.~Giga, Y.~Giga,
\emph{Generalized motion by nonlocal curvature in the plane},
Arch.\ Ration.\ Mech.\ Anal.\ \textbf{159} (2001), no.~4, 295--333.
%
\bibitem{GGK01}
M.-H. Giga, Y. Giga, R. Kobayashi, \emph{Very singular diffusion equations}, Taniguchi Conference on Mathematics Nara '98,  Adv. Stud. Pure Math., \textbf{31} (2001), 93--125.

\bibitem{GGN13}
M.-H.~Giga, Y.~Giga, A.~Nakayasu,
\emph{On general existence results for one-dimensional singular diffusion equations with spatially inhomogeneous driving force},
Geometric Partial Differential Equations proceedings, CRM Series, Vol.\ 15, Scuola Normale Superiore Pisa, Pisa, 2013, 145--170.
%
\bibitem{GGP13}
M.-H.~Giga, Y.~Giga, N.~Po\v{z}\'{a}r, \emph{Anisotropic total variation flow of non-divergence type on a higher dimensional torus}, Adv. Math. Sci. Appl. {\bf 23} (2013), no. 1, 235--266.

%
\bibitem{GGP14}
M.-H.~Giga, Y.~Giga, N.~Po\v{z}\'{a}r,
\emph{Periodic total variation flow of non-divergence type in $\mathbb{R}^n$},
J.\ Math.\ Pures Appl.\ (9), \textbf{102} (2014), no.\ 1, 203-–233.

\bibitem{GGR14}
M.-H.~Giga, Y.~Giga, P.~Rybka,
\emph{A comparison principle for singular diffusion equations with spatially inhomogeneous driving force for graphs},
Arch.\ Ration.\ Mech.\ Anal.\ \textbf{211} (2014), 419--453.
\bibitem{GiGoRy}
Y.Giga, P.G\'orka, P.Rybka, \emph{Bent rectangles as viscosity solutions over a circle}, Nonlinear Anal. 
{\bf 125} (2015), 518--549.
%
\bibitem{kiemury}
K.Kielak, P.B.Mucha, P.Rybka, Almost classical solutions to the total variation flow, 
{\it J. Evol. Eqs}, {\bf 13}, (2013), 21--49.
%
 \bibitem{lady}
 O.A.Lady\v zenskaja,  V.A.Solonnikov,  N.N.Ural'ceva,
``Linear and Quasilinear Equations of Parabolic Type,''
Am. Math. Soc., 
  Providence, R. I., 1968.
  %
\bibitem{Loj}
{S.\L{}ojasiewicz}, {\it An introduction to the theory of real functions}, John Wiley \& Sons, Ltd., Chichester, (1988).
%
\bibitem{Mandallena}
J.-Ph.Mandallena, 
Quasiconvexification of geometric integrals. 
Ann. Mat. Pura Appl. (4) {\bf 184} (2005), no. 4, 473--493. 
%
\bibitem{Miry}
M.~Matusik, P.~Rybka,
\emph{Oscillating facets}, Portugal. Math. {\bf 73} (2016), 1--40.
%
\bibitem{Mucha}
P.B.Mucha, \emph{Stagnation, creation, breaking} Adv. Math. Sci. Appl. {\bf 24} (2014),  223--235.
%
\bibitem{mury2}
P.B. Mucha,  P.Rybka,
A caricature of a singular curvature flow in the plane, Nonlinearity, {\bf 21}, (2008), 2281--2316
%
\bibitem{mury}
P.B. Mucha, P.Rybka, \emph{Well-posedness of sudden directional diffusion equations}, Math. Meth. Appl. Sci. \textbf{36}  (2013), 2359--2370.
%
\bibitem{mury-cb}P.B. Mucha, P.Rybka, Almost classical solutions of static Stefan type problems involving crystalline curvature, in: ``Nonlocal and Abstract Parabolic Equations and their Applications", Eds: Piotr Mucha, Marek Niezg\'odka and Piotr Rybka, Banach Center Publ. 86, IMPAN, Warszawa, 2009, 223-234. 
%
\bibitem{simon}
J.Simon,
Compact sets in the space $L^p(0,T; B)$, {\it  Ann. Mat. Pura Appl.}
{\bf 146}, (1987), 65-96.
%
\bibitem{spohn}
{H. Spohn}, Surface dynamics below the roughening transition,
{\it J. Physique I}  {\bf  3}, (1993), 68-81.


\end{thebibliography}

\providecommand{\bysame}{\leavevmode\hbox to3em{\hrulefill}\thinspace}
\providecommand{\MR}{\relax\ifhmode\unskip\space\fi MR }
\providecommand{\MRhref}[2]{%
  \href{http://www.ams.org/mathscinet-getitem?mr=#1}{#2}
}
\providecommand{\href}[2]{#2}

\end{document}